\newtheorem{thm}{Theorem}[section]
\newtheorem{lemma}[thm]{Lemma}
\newtheorem{proposition}[thm]{Proposition}
\newtheorem{rem}[thm]{Remark}
\numberwithin{equation}{section}
\title{Exit problem for Ornstein-Uhlenbeck processes:\\ a random walk approach.}
\begin{document}

\author{S. Herrmann and N. Massin\\[5pt]
\small{Institut de Math{\'e}matiques de Bourgogne (IMB) - UMR 5584, CNRS,}\\
\small{Universit{\'e} de Bourgogne Franche-Comt\'e, F-21000 Dijon, France} \\
\small{Samuel.Herrmann@u-bourgogne.fr}\\
\small{Nicolas.Massin@u-bourgogne.fr}
}
\maketitle

\begin{abstract}
In order to approximate the exit time of a one-dimensional diffusion process, we propose an algorithm based on a random walk. Such an algorithm so-called Walk on Moving Spheres was already introduced in the Brownian context. The aim is therefore to generalize this numerical approach to the Ornstein-Uhlenbeck process and to describe the efficiency of the method.
\end{abstract}
\textbf{Key words and phrases:} Exit time, Ornstein-Uhlenbeck processes, generalized spheroids, WOMS algorithm.\\
\textbf{2010 AMS subject classifications:} primary: 65C05; secondary: 60J60, 60G40, 60G46.
\section{Introduction}
Simulating the first exit time for a diffusion from a given domain is primordial since these  times appear in many domains. In mathematical finance, for instance, studying barrier options requires to estimate if the underlying stock price stays in a given interval. 
%
%
%
 In the simple Black-Scholes model, the distribution of the first exit time is therefore well-known. In more complex models corresponding to general diffusion processes, such an explicit expression is not available and requires the use of numerical approximations.

Several methods have been introduced in order to approximate first exit times. The classical and most common approximation method is the Euler–Maruyama scheme based on a time discretization procedure. The exit time of the diffusion process is in that case replaced by the exit time of the scheme. The approximation is quite precise but requires to restrict the study on a given fixed time interval on one hand and to describe precisely the probability for the diffusion to exit inbetween two consecutive nodes of the time grid on the other hand.
In this study, we aim to introduce a random walk in order to approximate the diffusion exit time from a given interval. Let us introduce $(X_t,\, t\ge 0)$ the unique solution of a stochastic differential equation:
\[
dX_t=b(t,X_t)\,dt+\sigma(t,X_t)\,dW_t,\quad t\ge 0,
\]
where $(W_t,\, t\ge 0)$ stands for a one-dimensional Brownian motion. Let us also fix some interval $I=[a,b]$ which strictly contains the starting position $X_0=x$. We denote by $\mathcal{T}$ the diffusion first exit time:
\[
\mathcal{T}=\inf\{t\ge 0:\ X_t\notin [a,b]\}.
\]
Our approach consists in constructing a random walk $(T_n,X_n)_{n\ge 0}$ on $\mathbb{R}_+\times \mathbb{R}$ which corresponds to a skeleton of the Brownian paths. In other words, the sequence $(T_n,X_n)$ belongs to the graph of the trajectory. Moreover we construct the walk in such a way that $(T_n,X_n)$ converges as time elapses towards the exit time and location $(\mathcal{T},X_{\mathcal{T}})$. It suffices therefore to introduce a stopping procedure in the algorithm to achieve the approximation scheme. Of course, such an approach is interesting provided that $(T_n,X_n)$ is easy to simulate numerically. For the particular Brownian case, the distribution of the exit time from an interval has a quite complicated expression which is difficult to use for simulation purposes (see, for instance \cite{sacerdote-telve-zucca}) whereas the exit distribution from particular time-dependent domains, for instance
the spheroids also called \emph{heat balls}, can be precisely determined. These time-dependent domains are characterized by their boundaries: 
 \begin{equation}
 \psi_\pm(t) = \pm\sqrt{t\log\left(\frac{d^2}{t}\right)}, \quad \text{for } t \in [0,d^2],
 \label{sphbm}
 \end{equation}
where the parameter $d>0$ corresponds to the size of the spheroid. The first time the Brownian motion path $(t, W_t)$ exits from the domain $\{(t,x):\ |x|\le \psi_+(t)\}$, denoted by $\tau$, is well-known. Its probability density function \cite{lerche} is given by
\begin{equation}
p(t)= \frac{1}{d \sqrt{2 \pi}} \sqrt{\frac{1}{t}\log\left(\frac{d^2}{t}\right)},\quad t\ge 0.
\end{equation}
It is therefore easy to generate such an exit time since $\tau$ and $d^2U^2e^{-N^2}$are identically distributed. Here $U$ and $N$ are independent random variables,  $U$ is uniformly distributed on $[0,1]$ and $N$ is a standard gaussian random variable.
Let us notice that the boundaries of the spheroids satisfy the following bound:
\begin{equation}
\vert \psi_\pm(t)\vert \leqslant \frac{d}{\sqrt{e}}, \quad \forall t\in [0,d^2].
\label{boundmb}
\end{equation}
This remark permits to explain the general idea of the algorithm. First we consider $(T_0,X_0)$ the starting time and position of the Brownian paths, that is $(0,x)$. Then we choose the largest parameter $d$ possible such that the spheroid starting in $(T_0,X_0)$ is included in the domain $\mathbb{R}_+\times [a,b]$. We observe the first exit time of this spheroid and its corresponding exit location, this couple is denoted by $(T_1,X_1)$. Due to the translation invariance of the Brownian motion, we can construct an iterative procedure, just considering $(T_1,X_1)$ like a starting time and position for the Brownian motion. So we consider a new spheroid included in the interval and $(T_2,X_2)$ shall correspond to the exit of this second spheroid and so on. Step by step we construct a random walk on spheroids also called WOMS algorithm (Walk On Moving Spheres) which converges towards the exit time and position $(\mathcal{T},W_{\mathcal{T}})$. This sequence is stopped as soon as the position $X_n$ is
close enough to the boundary of the considered interval. The idea of this algorithm lies in the definition of spherical processes and the walk on spheres introduced by Müller \cite{mul} and used in the sequel by Motoo \cite{mot} and Sabelfeld \cite{sab1} \cite{sab2}. It permits also in some more technical advanced way to simulate the first passage time for Bessel processes \cite{Deaconu-Herrmann}.

In this study, we focus our attention on a particular family of diffusions which is strongly related to the Brownian motion: the Ornstein-Uhlenbeck processes. The
idea is to use this link to adapt the Brownian algorithm in an appropriate way. This link implies changes on the time-dependent domains for which the exit problem can be expressed in a simpler way.
We present the random walk algorithm (WOMS) for the Ornstein-Uhlenbeck process, describe the approximation error depending on the stopping procedure and emphasize the efficiency of the method. We describe the mean number of generalized spheroids necessary to obtain the approximated exit time.

\section{The Ornstein-Ulhenbeck processes}

Let us first recall the definition of the Ornstein-Uhlenbeck process and present different essential properties which permit to link this diffusion to a standard Brownian motion.\\ 
Let $\theta \in \mathbb{R}^+$, $\sigma \in \mathbb{R}^+$, $\mu \in \mathbb{R}$. The Ornstein-Uhlenbeck process (O.U.) starting in $x_0$ with parameters $\theta$, $\mu$, and $\sigma$ is the unique solution of the following stochastic differential equation (SDE):
\begin{equation}
dX_t = - \theta(X_t - \mu)dt + \sigma dW_t,\quad t\ge 0,
\label{edsou}
\end{equation}
where $W$ stands for a standard one-dimensional Brownian motion.
Existence and uniqueness for equation \eqref{edsou} can be easily deduced from a general statement concerning SDE, see for instance Revuz, Yor, Chap. IX \cite{rev}. Let us just recall this result.
\begin{proposition}
\label{pathunique}
Consider the following stochastic differential equation 
\begin{equation}
dX_t = b(t,X_t)dt + \sigma(t,X_t) dW_t,\quad t\ge 0.
\label{existence}
\end{equation}
If there exists a Borel function $\rho : ]0, + \infty[ \rightarrow ]0, + \infty[$ satisfying $\int_0^{+\infty} \frac{dx}{\rho(x)} = +\infty$ and such that
\begin{equation*}
\vert \sigma(s,x) - \sigma(s,y)\vert^2 \leqslant \rho(\vert x-y  \vert),\quad  \forall x, y \in ]0,+\infty[,\quad  \forall t \in \mathbb{R}^+.
\end{equation*}
and if, for each compact set $H$ and each $t \geqslant 0$, there exists a constant $K_t>0$ such that 
\begin{equation*}
\vert b(s,x) - b(s,y)\vert \leqslant K_t \vert x-y  \vert,\quad  \forall x, y \in H, \quad s \leqslant t
\end{equation*}
then pathwise uniqueness holds for equation \eqref{existence}.
\end{proposition}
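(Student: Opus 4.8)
The plan is to reproduce the classical Yamada--Watanabe argument, the hypotheses being precisely the regularity ($\rho$-continuity of $\sigma$, local Lipschitz continuity of $b$) under which pathwise uniqueness still survives. Let $X^1$ and $X^2$ be two solutions of \eqref{existence} built on the same Brownian motion $W$ with the same initial value, and set $\Delta_t=X^1_t-X^2_t$. The aim is to prove that $\mathbb{E}\,|\Delta_t|=0$ for every $t\ge 0$, which together with the continuity of the trajectories forces $X^1\equiv X^2$ almost surely.

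The first step is to build a smooth approximation of $x\mapsto|x|$ tuned to $\rho$. Using the divergence assumption $\int_0^{+\infty}\frac{du}{\rho(u)}=+\infty$ near the origin, choose a strictly decreasing sequence $a_n\downarrow 0$ with $a_0=1$ and $\int_{a_n}^{a_{n-1}}\frac{du}{\rho(u)}=n$; for each $n$ pick a continuous function $\psi_n$ supported in $(a_n,a_{n-1})$ with $0\le\psi_n(u)\le\frac{2}{n\,\rho(u)}$ and $\int\psi_n=1$, and set $\phi_n(x)=\int_0^{|x|}\!\!\int_0^{y}\psi_n(u)\,du\,dy$. Then $\phi_n\in C^2$, $\phi_n$ is even, $0\le\phi_n(x)\uparrow|x|$, $|\phi_n'|\le 1$, and $\phi_n''(x)=\psi_n(|x|)$. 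The whole role of the integral condition on $\rho$ is to make this construction possible: it leaves just enough room for $\psi_n$ to integrate to $1$ while staying below $2/(n\rho)$, so that the second-order term in Itô's formula ends up being of order $1/n$.

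Next, introduce the localizing times $\tau_R=\inf\{t\ge 0:\ |X^1_t|\vee|X^2_t|\ge R\}$ and apply Itô's formula to $\phi_n(\Delta_{t\wedge\tau_R})$. After localization the stochastic-integral part is a centered martingale, the drift part is controlled by $|\phi_n'|\le 1$ together with the local Lipschitz bound $|b(s,X^1_s)-b(s,X^2_s)|\le K_t\,|\Delta_s|$ on the compact set $\{|x|\le R\}$, and the second-order part satisfies $\tfrac12\phi_n''(\Delta_s)\big(\sigma(s,X^1_s)-\sigma(s,X^2_s)\big)^2\le\tfrac12\,\psi_n(|\Delta_s|)\,\rho(|\Delta_s|)\le\tfrac1n$. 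Taking expectations gives
\[
\mathbb{E}\big[\phi_n(\Delta_{t\wedge\tau_R})\big]\ \le\ K_t\int_0^t\mathbb{E}\big[|\Delta_{s\wedge\tau_R}|\big]\,ds\ +\ \frac{t}{2n}.
\]

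Finally, letting $n\to\infty$ and using $\phi_n(\Delta)\uparrow|\Delta|$ yields $\mathbb{E}|\Delta_{t\wedge\tau_R}|\le K_t\int_0^t\mathbb{E}|\Delta_{s\wedge\tau_R}|\,ds$, so Gronwall's lemma forces $\mathbb{E}|\Delta_{t\wedge\tau_R}|=0$ for all $t$; since the solutions are continuous on $[0,+\infty)$ one has $\tau_R\uparrow+\infty$ almost surely, and Fatou's lemma then gives $\Delta_t=0$ a.s. for each $t$, hence $X^1\equiv X^2$ by path-continuity. The only genuinely delicate point is the construction of the sequence $(\phi_n)$ and the verification of its properties; everything else — localization, the Lipschitz estimate, Gronwall — is routine. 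For the Ornstein--Uhlenbeck equation \eqref{edsou} itself the statement is of course immediate, since there $\sigma$ is constant and $b$ is globally Lipschitz, so the hypotheses hold trivially on all of $\mathbb{R}$.
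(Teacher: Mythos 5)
Your proof is correct: it is the classical Yamada--Watanabe argument, and all the essential steps (the construction of $\phi_n$ from the divergence of $\int_{0^+}\rho(u)^{-1}\,du$, the bound $\tfrac12\phi_n''\cdot|\sigma(s,x)-\sigma(s,y)|^2\le 1/n$, localization, Gronwall) are in place. Note, however, that the paper does not prove this proposition at all --- it is recalled verbatim from Revuz--Yor, Chap.~IX, so there is no ``paper proof'' to match. The route taken in that reference (Theorem IX.3.5, after Le Gall) is genuinely different from yours: instead of smoothing $|x|$, one first shows via the occupation-times formula that the hypothesis on $\sigma$ forces the local time at $0$ of $\Delta=X^1-X^2$ to vanish, then applies Tanaka's formula to $|\Delta_t|$, so that only the drift term survives and the local Lipschitz bound plus Gronwall finish the job. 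The two approaches are of comparable difficulty; yours avoids local times entirely at the cost of the explicit $(\phi_n)$ construction. Two small points worth tightening. First, the constant in your displayed inequality should be $t/n$ rather than $t/(2n)$, since $\tfrac12\psi_n\rho\le 1/n$; this is immaterial. Second, since $\rho$ is only assumed Borel (not increasing or locally bounded), the bound $|\sigma(s,X^1_s)-\sigma(s,X^2_s)|^2\le\rho(|\Delta_s|)$ does not by itself guarantee that the stopped stochastic integral is a true centered martingale; one should insert a further reducing sequence of stopping times for that local martingale, take expectations, and pass to the limit by bounded convergence (everything is bounded before $\tau_R$). This is routine but should be said. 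Finally, you correctly read the condition $\int_0^{+\infty}\rho(u)^{-1}\,du=+\infty$ as divergence at $0^+$, which is the form actually needed and the one in the cited reference.
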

Since obviously the drift and diffusion coefficients of the O.U. process satisfy the hypotheses of Proposition \ref{pathunique}, pathwise uniqueness holds for \eqref{edsou}. Let us now present an explicit expression of this solution. The Ornstein-Uhlenbeck process can be written as a stochastic integral with respect to the Brownian motion:
\begin{equation}
X_t = X_0e^{-\theta t} + \mu(1 - e^{-\theta t}) + \sigma e^{-\theta t} \displaystyle{\int_0^t e^{\theta s} dW_s}, t \geqslant 0.
\end{equation}
Levy's theorem permits to replace the stochastic integral by a time-changed Brownian motion. We obtain therefore another expression for the process which is more handy to manipulate.\\
Since $\theta>0$, there exists a standard Brownian motion $(V_t)_{t\ge 0}$ such that 
\begin{equation}
X_t = X_0 e^{-\theta t} + \mu(1 - e^{-\theta t}) +\frac{\sigma e^{-\theta t}}{\sqrt{2\theta}} V_{\displaystyle{e^{2\theta t} - 1}}.
\label{eqou} 
\end{equation}
This simplified expression is a crucial tool for the construction of the algorithm in the exit problem framework as it clearly appears in the forthcoming statements.

\begin{rem}
In following computations, we put  $\mu = 0$. This restriction is only motivated by notational simplification and the study can easily be extended to the general case.
\end{rem}

Let us now describe how such a strong relation between the Brownian motion and the Ornstein-Uhlenbeck process permits to emphasize a time-dependent domain of $\mathbb{R}$ whose exit time can be easily and exactly simulated.

\section{Exit time of generalized spheroids}
Let us consider the spheroids defined by the boundaries $\psi^{\pm}(t)$ in \eqref{sphbm}. We recall that the Brownian exit problem of a such a spheroid is completely explicit, so that the simulation of the exit time $\tau$ is rather simple. Due to the symmetry property of the spheroid, the conditional probability distribution of the exit location $W_{\tau}$ given $\tau$ is equal to $\frac{1}{2}\,\delta_{\psi^+(\tau)}+\frac{1}{2}\,\delta_{\psi^-(\tau)}$.
For the Ornstein-Uhlenbeck process, we can obtain some similar information due to the strong relation with the Brownian motion.\\ 
Let us introduce two new boundaries defined by:
\[
\psi_{\rm OU}^\pm(t,x)=e^{-\theta t}\left(\frac{\sigma}{\sqrt{2 \theta}} \psi_{\pm}(e^{2 \theta t} -1) + x\right),
\]
where $\theta$ and $\sigma$ correspond to the parameters of the O.U-process $(X_t)_{t\ge 0}$ in \eqref{edsou}. We call \emph{generalized spheroid} the domain defined by these boundaries.\\
We introduce the exit time $\tau_{\rm OU} =\inf\{t >0:\  X_t \notin [\psi_{\rm OU}^-(t,x),\psi_{\rm OU}^+(t,x)]\}$. 

\begin{proposition}
\label{exitimou}
Let $\tau=\inf\{t >0:\ V_t \notin [\psi_-(t),\psi_+(t)]\}$ the first time the Brownian motion $(V_t)_{t\ge 0}$ defined in \eqref{eqou} exits from the spheroid. Then the exit time $\tau_{OU}$ satisfies:

\begin{equation}
\tau_{\rm OU}=\frac{\log(\tau + 1)}{2\theta} \text{ } a.s.
\label{relou}
\end{equation}
\end{proposition}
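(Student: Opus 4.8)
The plan is to exploit the explicit representation \eqref{eqou} of the Ornstein--Uhlenbeck process in terms of the time-changed Brownian motion $(V_t)_{t\ge 0}$, together with the definition of the boundaries $\psi_{\rm OU}^\pm$, to reduce the O.U.\ exit event to a Brownian exit event. Recall from \eqref{eqou} (with $\mu=0$) that
\[
X_t = x\, e^{-\theta t} + \frac{\sigma e^{-\theta t}}{\sqrt{2\theta}}\, V_{e^{2\theta t}-1},
\]
so that
\[
X_t = e^{-\theta t}\left(\frac{\sigma}{\sqrt{2\theta}}\, V_{e^{2\theta t}-1} + x\right).
\]
First I would compare this with the definition
\[
\psi_{\rm OU}^\pm(t,x) = e^{-\theta t}\left(\frac{\sigma}{\sqrt{2\theta}}\,\psi_\pm(e^{2\theta t}-1) + x\right),
\]
and observe that the common factor $e^{-\theta t}>0$ and the common additive term $x$ cancel in the relevant inequalities. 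Hence, for any fixed $t$, the statement $X_t \in [\psi_{\rm OU}^-(t,x),\psi_{\rm OU}^+(t,x)]$ is equivalent to $V_{e^{2\theta t}-1} \in [\psi_-(e^{2\theta t}-1),\psi_+(e^{2\theta t}-1)]$.

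Next I would introduce the deterministic time change $s = e^{2\theta t}-1$, i.e.\ $t = \frac{\log(s+1)}{2\theta}$, which is a continuous strictly increasing bijection from $[0,\infty)$ onto $[0,\infty)$ with $t=0 \leftrightarrow s=0$. Under this substitution the event ``$(u,X_u)$ has stayed in the generalized spheroid for all $u\le t$'' translates exactly into ``$(v,V_v)$ has stayed in the spheroid for all $v \le e^{2\theta t}-1$''. Because the time change is a strictly increasing homeomorphism of $\mathbb{R}_+$, infima are preserved: the first time the O.U.\ path leaves the generalized spheroid corresponds, under $s\mapsto t$, to the first time $(V_v)$ leaves the ordinary spheroid. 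Writing this out gives
\[
\tau_{\rm OU} = \inf\{t>0:\ X_t\notin[\psi_{\rm OU}^-(t,x),\psi_{\rm OU}^+(t,x)]\}
= \inf\left\{t>0:\ V_{e^{2\theta t}-1}\notin[\psi_-(e^{2\theta t}-1),\psi_+(e^{2\theta t}-1)]\right\},
\]
and substituting $t = \frac{\log(s+1)}{2\theta}$ in the last infimum yields $\tau_{\rm OU} = \frac{\log(\tau+1)}{2\theta}$, where $\tau = \inf\{s>0:\ V_s\notin[\psi_-(s),\psi_+(s)]\}$, which is the claim \eqref{relou}.

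The main point requiring care — the only genuine obstacle — is the rigorous justification that the deterministic, continuous, strictly monotone time change commutes with the infimum defining the hitting time, i.e.\ that no issues arise at the boundary of the spheroid or from the right-continuity conventions in the definitions of $\tau$ and $\tau_{\rm OU}$. This is where I would be slightly careful: since $\psi_\pm$ are continuous on $[0,d^2]$ and the paths of $V$ (hence of $X$) are continuous a.s., the hitting times of the closed complement coincide a.s.\ with the hitting times of the open complement, so the equivalence of events holds pathwise up to a null set, and the change of variable $s\mapsto \frac{\log(s+1)}{2\theta}$ transports one infimum to the other. Everything else (the algebraic cancellation of $e^{-\theta t}$ and $x$, the monotonicity of the time change, the endpoint correspondence $s=0\leftrightarrow t=0$) is routine.
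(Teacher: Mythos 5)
Your proposal is correct and follows essentially the same route as the paper: cancel the common factor $e^{-\theta t}$ and the shift $x$ in the defining inequalities, then apply the strictly increasing time change $s=e^{2\theta t}-1$ to identify the two infima. The extra care you devote to showing that a continuous strictly monotone time change commutes with the first-exit infimum is a point the paper passes over silently, but it does not change the argument.
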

 
\begin{proof}
Using both the definition of $\tau_{\rm OU}$ and the expression of $X_t$ with respect to the Brownian motion $V_t$, we obtain 
\begin{align*}
\tau_{\rm OU} &= \inf\left\lbrace t >0:\ X_t \notin [\psi_{\rm OU}^-(t,x),\psi_{\rm OU}^+(t,x)]\right\rbrace\\
&=\inf\left\lbrace t >0:\ xe^{-\theta t} + \frac{\sigma}{\sqrt{2\theta}}e^{-\theta t} V_{e^{2\theta t} -1} \notin [\psi_{\rm OU}^-(t,x),\psi_{\rm OU}^+(t,x)]\right\rbrace \\
&=\inf\left\lbrace t >0:\frac{\sigma}{\sqrt{2\theta}}e^{-\theta t} V_{e^{2\theta t} -1} \notin [\psi_{\rm OU}^-(t,x) - xe^{-\theta t},\psi_{\rm OU}^+(t,x) - xe^{-\theta t}]\right\rbrace\\
&=\inf\left\lbrace \frac{\log(u+1)}{2\theta} >0:\  V_u \notin [\psi_{-}(u),\psi_{+}(u)] \right\rbrace =\frac{\log(\tau + 1)}{2\theta}.
\end{align*}
\end{proof}

This statement is a crucial tool for simulation purposes. It permits first to simulate a  Brownian exit time from a spheroid, then to use Proposition \ref{exitimou} to obtain the O.U. exit time from the generalized spheroid. Let us notice that the shape of the generalized spheroid depends on the O.U. starting position. Therefore, if we define a WOMS, the shape of the spheroids will change at each step of the algorithm. In the Brownian motion context, the spheroids are symmetric and their extremas can be computed easily. This important advantage  permits to compute easily the maximal size of the spheroids included in the interval $[a,b]$ and is not fulfilled in the O.U. case. It is therefore an harder work to determine the optimal size of the generalized spheroid. This can be achieved by finding an upper-bound for the upper boundary and a lower-bound  for the lower boundary. As a consequence,  we determine a parameter characterizing the generalized spheroid which guaranties that it remains fully contained in the interval $[a,b]$. Since the bounds are quite rough, the boundaries of the generalized spheroid are unfortunately not tangent to the interval bounds. 
The algorithm shall be therefore a little slowed down.

\begin{proposition}
\label{dou}
Let $\gamma > 0$, and $x \in [a,b]$ the starting point of the spheroid, that is $\psi^{\pm}_{OU}(0,x) = x$. Let us set $a_{\gamma,x} = a+ \gamma(x-a)$ and $b_{\gamma,x} = b - \gamma (b-x)$.  We define
\begin{equation}
d=\left \{
\begin{array}{c @{\text{ if }} l}
    \sqrt{2 \theta e} \min\left(\frac{(b_{\gamma,x}-x)}{\sigma},\frac{2(x-a_{\gamma,x})}{\sqrt{\sigma^2 + 4\theta e\,x(x-a_{\gamma,x})} + \sigma}\right) &  x\geqslant 0\\
    \sqrt{2 \theta e} \min\left( \frac{(x-a_{\gamma,x})}{\sigma},\frac{2(b_{\gamma,x}-x)}{\sqrt{\sigma^2- 4\theta e\,x(b_{\gamma,x}-x)} + \sigma}\right) & x\leqslant 0
\end{array}
\right.
\label{paramou}
\end{equation}
For such a choice of parameter, the generalized spheroid is fully contained in the interval $[a_{\gamma,x},b_{\gamma,x}]$.
\end{proposition}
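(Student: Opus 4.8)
The plan is to obtain explicit bounds on the two boundaries $\psi_{\rm OU}^\pm(t,x)$ over the relevant time interval and then force those bounds to lie inside $[a_{\gamma,x},b_{\gamma,x}]$; reading off the largest admissible $d$ yields the stated formula. First I would recall from \eqref{boundmb} that $|\psi_\pm(s)|\le d/\sqrt{e}$ for all $s\in[0,d^2]$, and that the generalized spheroid lives on the time interval where $e^{2\theta t}-1\in[0,d^2]$. Substituting this into the definition $\psi_{\rm OU}^\pm(t,x)=e^{-\theta t}\bigl(\tfrac{\sigma}{\sqrt{2\theta}}\psi_\pm(e^{2\theta t}-1)+x\bigr)$ gives, writing $u=e^{-\theta t}\in[0,1]$ (or rather in the subinterval determined by $d$),
\[
\psi_{\rm OU}^+(t,x)\le u\Bigl(\frac{\sigma d}{\sqrt{2\theta e}}+x\Bigr),\qquad
\psi_{\rm OU}^-(t,x)\ge u\Bigl(-\frac{\sigma d}{\sqrt{2\theta e}}+x\Bigr),
\]
so everything reduces to bounding the scalar functions $u\mapsto u(c+x)$ and $u\mapsto u(-c+x)$ with $c=\sigma d/\sqrt{2\theta e}\ge 0$.

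The core of the argument is then the elementary optimisation of these linear-in-$u$ expressions over the admissible range of $u$. The subtlety, and the reason for the case split $x\ge 0$ versus $x\le 0$ in \eqref{paramou}, is that the maximum of $u(c+x)$ over $u\in[0,1]$ is attained at $u=1$ when $c+x\ge 0$ but the behaviour of $u(-c+x)$ depends on the sign of $-c+x$; symmetrically for the lower boundary. I would treat $x\ge 0$ first. There the upper boundary is easiest: requiring $u(c+x)\le b_{\gamma,x}$ for all admissible $u\le 1$ reduces to $c+x\le b_{\gamma,x}$, i.e. $c\le b_{\gamma,x}-x$, which after substituting $c=\sigma d/\sqrt{2\theta e}$ gives the first term $\sqrt{2\theta e}\,(b_{\gamma,x}-x)/\sigma$ in the minimum. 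For the lower boundary one must also keep track of the fact that $u$ ranges only over $[e^{-\theta d^2/\dots},1]$ — more precisely over $u$ with $u^{-2}-1\le d^2$, equivalently $u\ge 1/\sqrt{1+d^2}$ — so the infimum of $u(-c+x)$ is attained at the left endpoint when $-c+x<0$. Imposing $\tfrac{1}{\sqrt{1+d^2}}(x-c)\ge a_{\gamma,x}$ gives a relation of the form $x-c\ge a_{\gamma,x}\sqrt{1+d^2}$; squaring (after checking signs) produces a quadratic inequality in $d$, and solving it yields exactly the second term $2(x-a_{\gamma,x})/\bigl(\sqrt{\sigma^2+4\theta e\,x(x-a_{\gamma,x})}+\sigma\bigr)$ (the "rationalised" root of the quadratic). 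Taking the minimum of the two constraints gives $d$ as in \eqref{paramou}.

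The case $x\le 0$ is handled symmetrically, exchanging the roles of the two boundaries: now the lower boundary gives the simple constraint and the upper boundary gives the quadratic one, producing the second line of \eqref{paramou}; I would either redo the computation or invoke the symmetry $x\mapsto -x$, $a\mapsto -b$, $b\mapsto -a$ of the whole configuration. The main obstacle I anticipate is purely bookkeeping rather than conceptual: one must be careful that the range of $u$ genuinely is $[1/\sqrt{1+d^2},1]$ and that the linear functions are monotone on it with the claimed sign, and one must verify that squaring the endpoint inequality is reversible (both sides nonnegative) before extracting the positive root of the quadratic $\theta e\, d^2\cdot(\text{something}) - \dots$. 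Once the quadratic $(x-a_{\gamma,x})$-inequality is written as $\alpha d^2+\beta d+\gamma'\le 0$ with the right coefficients, the stated bound is just $d\le(-\beta+\sqrt{\beta^2-4\alpha\gamma'})/(2\alpha)$ rewritten in conjugate form to avoid cancellation, so no real difficulty remains. Finally I would note that with this $d$ both bounds hold with the closed interval $[a_{\gamma,x},b_{\gamma,x}]$, which is what is claimed.
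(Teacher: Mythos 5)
Your overall strategy --- bound $\psi^{\pm}_{\rm OU}$ via $|\psi_\pm|\le d/\sqrt{e}$ and the range of $u=e^{-\theta t}\in[1/\sqrt{1+d^2},1]$, turn the upper boundary (for $x\ge 0$) into the linear constraint $c+x\le b_{\gamma,x}$ with $c=\sigma d/\sqrt{2\theta e}$, and turn the lower boundary into a quadratic inequality in $d$ --- is exactly the paper's, and your treatment of the upper boundary correctly recovers $d_u=(b_{\gamma,x}-x)\sqrt{2\theta e}/\sigma$. The gap is in the lower boundary. First, a monotonicity slip: when $x-c<0$ the map $u\mapsto u(x-c)$ is decreasing, so its infimum over $[1/\sqrt{1+d^2},1]$ is attained at $u=1$, not at the left endpoint as you assert; the constraint $\tfrac{x-c}{\sqrt{1+d^2}}\ge a_{\gamma,x}$ is the binding one only in the sub-case $x-c\ge 0$. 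More seriously, even granting that constraint, squaring $x-c\ge a_{\gamma,x}\sqrt{1+d^2}$ yields the quadratic $\bigl(\tfrac{\sigma^2}{2\theta e}-a_{\gamma,x}^2\bigr)d^2-\tfrac{2x\sigma}{\sqrt{2\theta e}}d+(x^2-a_{\gamma,x}^2)\ge 0$, whose relevant root involves $\sqrt{\tfrac{\sigma^2}{2\theta e}+(x-a_{\gamma,x})(x+a_{\gamma,x})}$ and is \emph{not} the stated $2(x-a_{\gamma,x})/\bigl(\sqrt{\sigma^2+4\theta e\,x(x-a_{\gamma,x})}+\sigma\bigr)$, since $2x(x-a_{\gamma,x})\ne(x-a_{\gamma,x})(x+a_{\gamma,x})$ in general. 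The paper reaches the stated root by a deliberately coarser, two-term bound: it keeps $e^{-\theta t}\tfrac{\sigma}{\sqrt{2\theta}}\psi_-\ge -c$ (using only $u\le 1$) separately from $e^{-\theta t}x\ge x/\sqrt{1+d^2}\ge x\bigl(1-\tfrac{d^2}{2}\bigr)$, which produces $P(d)=\tfrac{x}{2}d^2+\tfrac{\sigma}{\sqrt{2\theta e}}d+(a_{\gamma,x}-x)\le 0$; the positive root of $P$, written in conjugate form, is exactly the stated second term.

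Your sharper bound could in principle still validate the proposition, because spheroids with smaller $d$ are nested inside those with larger $d$: it would suffice to show that the stated $d$ is dominated by the admissible threshold your analysis produces. But you neither state nor verify this comparison, and as written the claim that your computation ``yields exactly the second term'' is false, so the proof does not close. (You also do not address $x=0$, which the paper treats as a limiting case, though that is minor.)
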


In the following statements we denote by $d_x$ the parameter associated to the spheroid with initial point $x$.

\begin{proof}
Let us first consider the case: $x>0$. Combining the upper bound of the function $\psi^+$ presented in \eqref{boundmb} and the definition of $\psi_{\rm OU}$, we obtain

\begin{equation}
-\frac{\sigma d}{\sqrt{2 \theta e}} + \frac{x}{\sqrt{ 1+ d^2}} \leqslant\psi_{\rm OU}^-(t,x) \leqslant \psi_{\rm OU}^+(t,x) \leqslant \frac{\sigma d}{\sqrt{2 \theta e}} + x.
\label{boundou}
\end{equation}
We keep the upper bound found previously and focus on the lower bound:
\begin{equation}
\psi_{\rm OU}^-(t,x) \geqslant -\frac{\sigma d}{\sqrt{2 \theta e}} + \frac{x}{\sqrt{ 1+ d^2}} \geqslant -\frac{\sigma d}{\sqrt{2 \theta e}} + x(1 - \frac{d^2}{2}).
\label{dvlptinfOU}
\end{equation}
The determination of a convenient choice for the parameter $d>0$ requires to find the positive solution of the equation $P(d)=0$ where 
\begin{equation*}
P(d) = x \frac{d^2}{2} + \frac{\sigma}{\sqrt{2 \theta e}}d +(a_{\gamma,x}-x).
\end{equation*}
Consequently we obtain
\begin{equation*}
d_l = \frac{1}{x}\sqrt{\frac{\sigma^2}{2 \theta e}+ 2x(x-a_{\gamma,x})} - \frac{\sigma}{x\sqrt{2 \theta e}}.
\end{equation*}
The identification with the upper bound gives us 
\begin{equation}
d_u = (b_{\gamma,x}-x)\frac{\sqrt{2 \theta e}}{\sigma}.
\end{equation}
Hence setting $d = \min(d_u, d_l)$ permits the generalized spheroid to belong to the interval $[a_{\gamma,x},b_{\gamma,x}]$.

The case $x<0$  uses similar arguments since we observe a symmetry with respect to the origin between the generalized spheroid starting in $x$ and the one starting in $-x$. We use the results previously computed for $\vert x\vert$ and $[-b_{\gamma,x},-a_{\gamma,x}]$ which leads to the statement. The case $x=0$ is simple to handle with, since the previous boundaries \eqref{boundou} become
\begin{equation*}
-\frac{\sigma d}{\sqrt{2 \theta e}} \leqslant\psi_{\rm OU}^-(t,0) \leqslant \psi_{\rm OU}^+(t,0) \leqslant \frac{\sigma d}{\sqrt{2 \theta e}}.
\end{equation*}
It suffices to set $d = \displaystyle{\frac{\sqrt{2 \theta e}}{\sigma}}\min(|a_{\gamma,0}|,b_{\gamma,0})$, which corresponds to the limit case as $x$ tends to $0$ in both results previously established.

\end{proof}

\section{WOMS for the Ornstein-Uhlenbeck processes}
Let us now present the approximation procedure of the Ornstein-Uhlenbeck exit time from a given interval $[a,b]$. This algorithm is based on a walk on generalized spheroids (WOMS) described in the previous section.

\begin{center}
ALGORITHM  (O.U. WOMS)
\end{center}
\fbox{\begin{minipage}{\textwidth}

\textit{Initialization:} Let: $X_0 = x_0$, $\mathcal{T}_{\epsilon} = 0$

\textit{From step $n$ to step $n+1$:}

While $X_n \leqslant b - \epsilon$ and $X_n \geqslant a + \epsilon$ do

\begin{itemize}
\item[$\bullet$] Generate the Brownian exit time from the spheroid with parameter $d_{X_n}$ defined in \eqref{paramou}. We denote this stopping time by $\tau_{n+1}$.

\item[$\bullet$] We set $\tau^{OU}_{n+1} =\frac{\log(\tau_{n+1} + 1)}{2\theta}$.

\item[$\bullet$] Generate a Bernoulli distributed r.v.  $\mathcal{B}\sim\mathcal{B}(\frac{1}{2})$, if $\mathcal{B}=1$ then set $X_{n+1} = \psi_{OU}^-(\tau^{OU}_{n+1},X_n)$ otherwise set $X_{n+1} = \psi_{OU}^+(\tau^{OU}_{n+1},X_n)$.

\item[$\bullet$] $\mathcal{T}_{\epsilon} \leftarrow \mathcal{T}_{\epsilon} + \tau_{n+1}^{OU}$.
\end{itemize}

\textit{Outcome:} $\mathcal{T}_{\epsilon}$ the approximated O.U.-exit time from the interval $[a,b]$.
\end{minipage}}

\begin{figure}[H]
   \centerline{\includegraphics[scale=0.3]{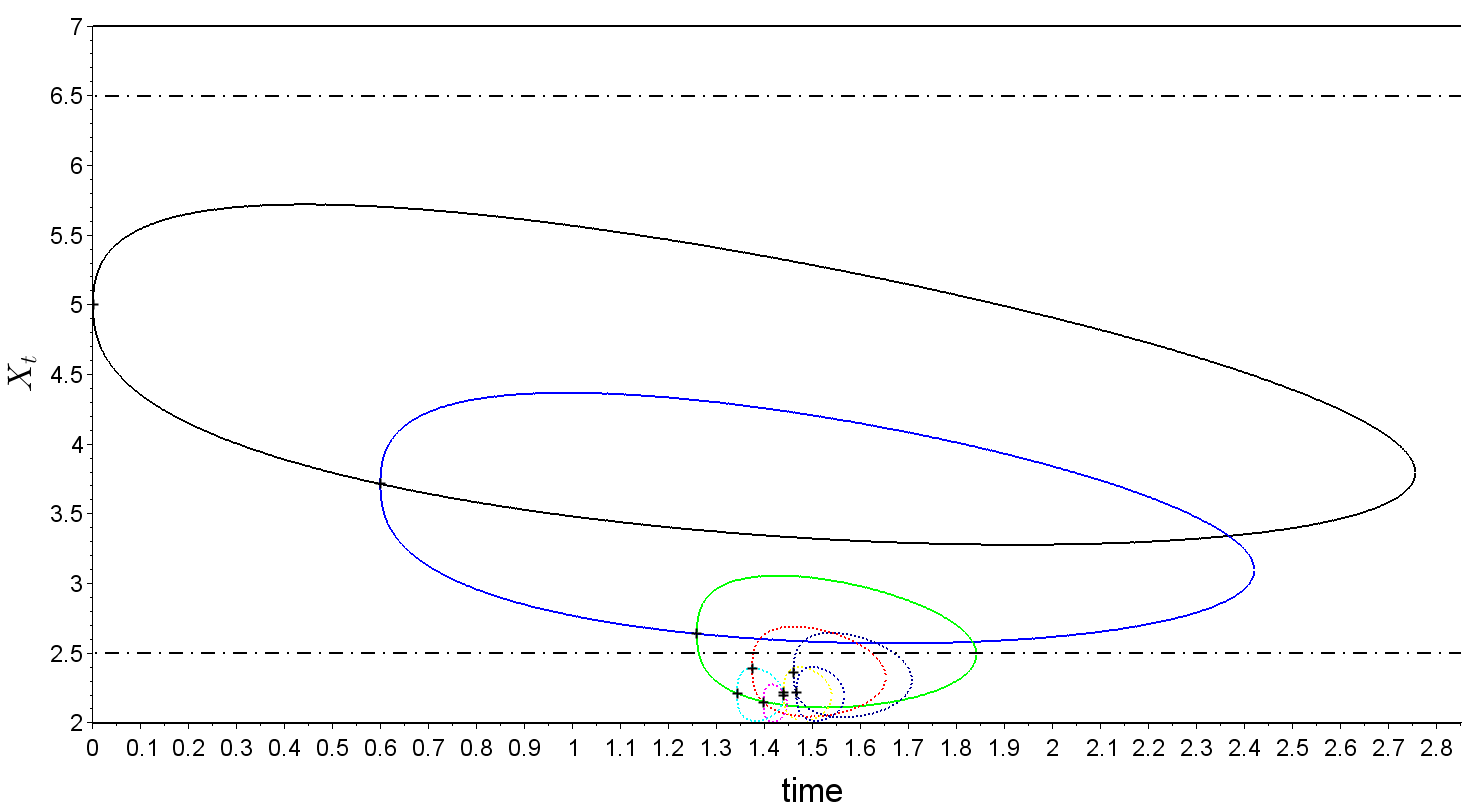}}
   \caption{\small A sample of  the algorithm for the O.U. exit time with parameters $\theta =0.1$ and $\sigma = 1$. We observe the walk on spheres associated with the diffusion process starting at $x=5$ and moving in the interval $[2,7]$. The algorithm corresponding to $\epsilon = 0,5$ is represented by the plain style spheroids whereas the case $\epsilon = 10^{-3}$ corresponds to the whole sequence of spheroids. In both cases we set $\gamma = 10^{-6}$ .}
\end{figure}

\begin{figure}[H]
   \includegraphics[width=\textwidth]{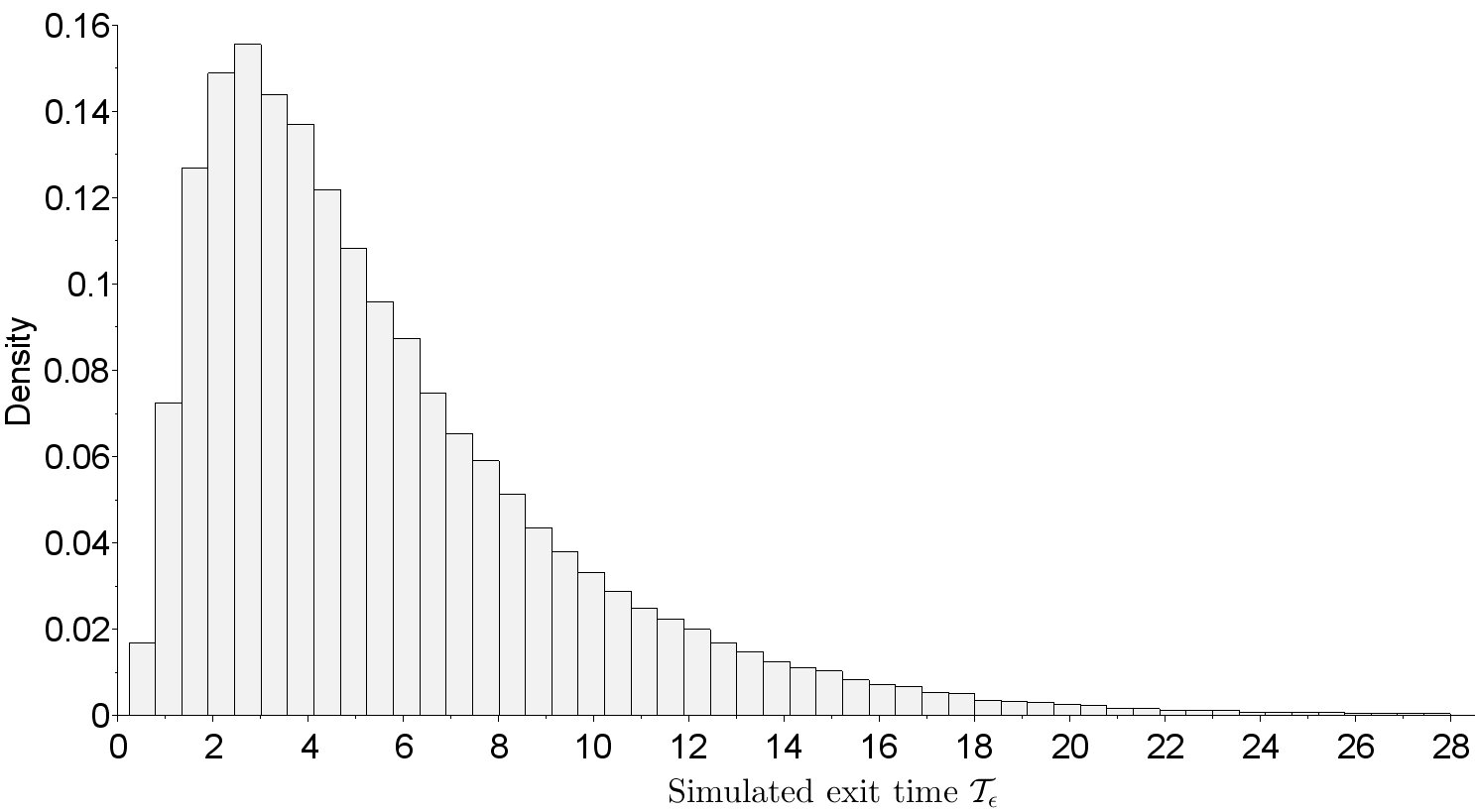}
   \caption{Histogram of the outcome variable for the O.U. with parameters $\theta =0.1$ and $\sigma = 1$ when the stopped diffusion process starts at 5 and involves in the interval [2,7] with $\epsilon = 10^{-3}$ and $\gamma = 10^{-6}$.}
\end{figure}

The CPU efficiency of such an algorithm shall be compared to the efficiency of classical approaches in the exit time approximation framework. Let us consider a particular situation:  the exit time from the interval  $[3,5]$ for the Ornstein-Uhlenbeck process  starting in $4$ with $\theta = 5$ and $\sigma =7$.  We use an improved Euler method based on the correction by means of the sharp large deviations estimate of the exit probability. Such a method takes into account the probability for the diffusion path to exit inbetween two neighboring gridpoints (see the procedure described in \cite{baldi-caramellino-iovino}). The simulation of $100\,000$ samples with the step size $10^{-4}$ requires 64,7 seconds for this improved Euler method whereas the WOMS algorithm presented in this paper requires about 2,19 seconds for the corresponding choice $\epsilon = 10^{-2}$ (here $\gamma = 10^{-6}$).\\

Even if the study presented here concerns the exit time of some given interval $[a,b]$ denoted by $\tau_{[a,b]}$, let us just mention the possible link with first passage times (FPT). Intuitively for negative $a$ with large value $|a|$, the exit time of the interval can be approximated by the first passage time of the level $b$ denoted by $\tau_b$ i.e. $\lim_{|a|\to\infty} \mathbb{P}(\tau_{[a,b]}=\tau_b)=1$. 
Several approaches permit to describe quite precisely the probability distribution of the Ornstein-Uhlenbeck FPT. In Figure \ref{fig:compar}, we illustrate that the distributions of both the exit time (histogram) and the first passage time (p.d.f.) present a thight fit. The histogram corresponds to the exit time obtained for an OU process starting in $-3$ with coefficients $\theta =1$ and $\sigma = 1$ and observed on the interval with bounds $a=-10$ and $b=-1$. The curve corresponds to a numerical approximation of the first passage time density presented by Buonocore, Nobile and Ricciardi in \cite{buonocore-al}. An other approximation procedure for the FPT simulation is proposed by Herrmann and Zucca in \cite{herrmann-zucca-exact}: it consists in simulating exactly the PFT of a slightly modified diffusion process. This modified diffusion has the following property: its drift term is bounded and coincides with the Ornstein-Uhlenbeck drift on the interval $[a,b]$ with $|a|$ large. Numerical comparisons permit to observe that the simulation of the exit time with the WOMS algorithm is highly more efficent than the method proposed in \cite{herrmann-zucca-exact}: the simulation of a sample of size $100\,000$ takes a total of 3,7 seconds of CPU time with the first method and 197,2 seconds with the second one. Here the OU-process starts in $-3$ with coefficients $\theta =1$ and $\sigma = 1$ and is observed on the interval $[-10,-1]$.
\begin{figure}[H]
   \centerline{\includegraphics[width=\textwidth]{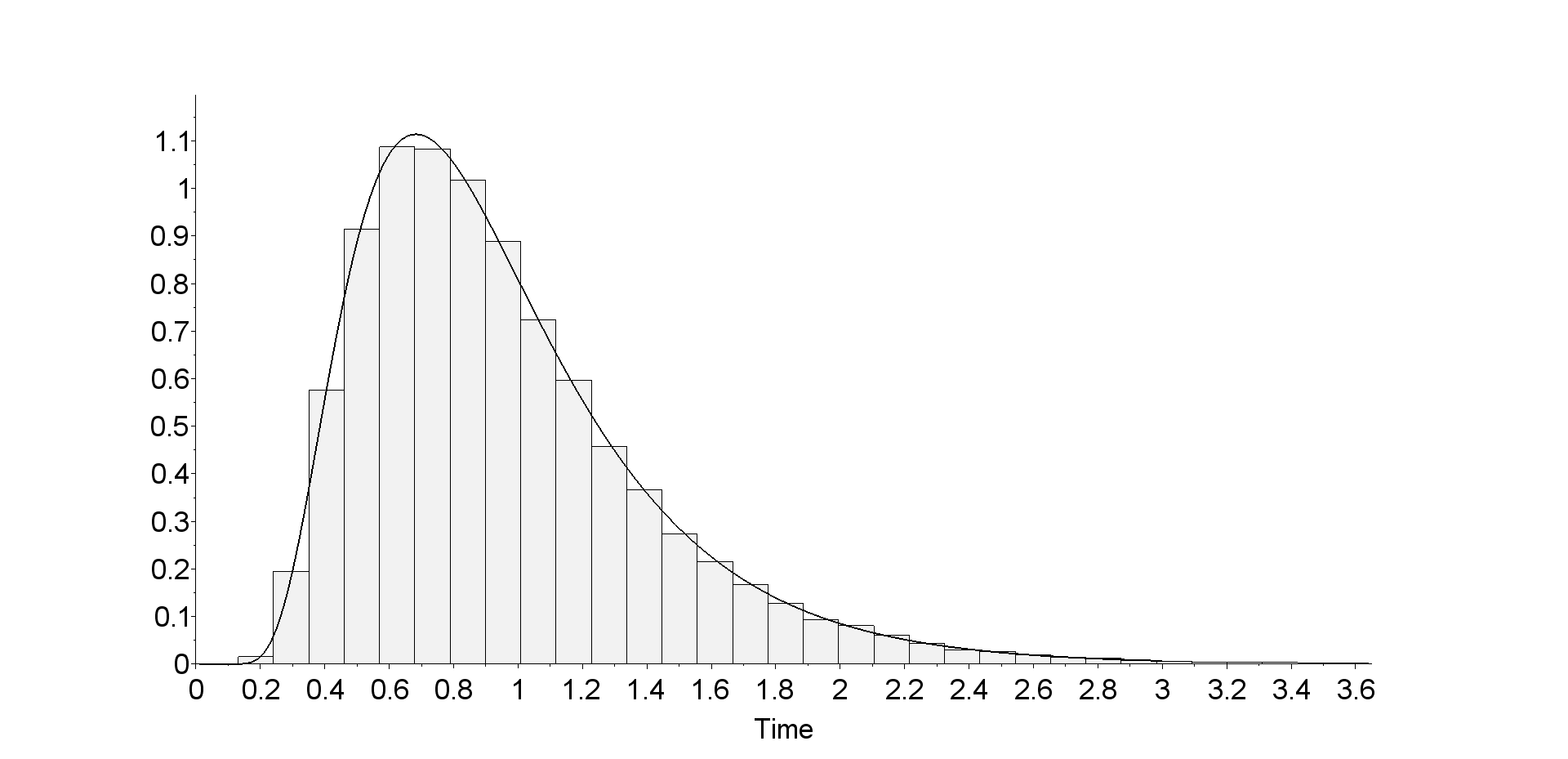}}
   \caption{\small Histogram of the approximated first exit time of the interval $[a,b]$ using the WOMS algorithm and approximated p.d.f. of the first passage time through the level $b$ (curve). Here $X_0=-3$, $\theta =1$, $\sigma = 1$ and $[a,b]=[-10,-1]$.}
   \label{fig:compar}
\end{figure}
Let us now describe the WOMS algorithm for the Ornstein-Uhlenbeck process and especially emphasize its efficiency through theoretical results.  We study how the strong relation between our process and the Brownian motion affects the statements obtained in the Brownian motion case. Let us just recall that the efficiency of the walk on spheres in the particular Brownian case is quite strong: the averaged number of steps is of the order $|\log(\epsilon)|$  (see for instance \cite{binder-braverman}, for an overview of the convergence rate). In the Ornstein-Uhlenbeck case, we reach a similar efficiency result. 

\subsubsection*{Average number of steps}

\begin{thm}
\label{meanou}
Let $N_{\epsilon}$ be the random number of steps observed in the algorithm. Then there exist a constant $\delta>0$ and $\epsilon_0 > 0$ such that 

\begin{equation}
\mathbb{E}[N_{\epsilon}] \leqslant \delta|\log(\epsilon)|, \quad \forall \epsilon \leqslant \epsilon_0.
\label{avou}
\end{equation}
\end{thm}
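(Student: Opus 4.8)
\medskip
\noindent\emph{Sketch of the proof I would carry out.}

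Write $\rho_n:=\min(X_n-a,\,b-X_n)$ for the distance of $X_n$ to the boundary of $[a,b]$; by construction of the algorithm $N_\epsilon=\inf\{n\ge 0:\ \rho_n<\epsilon\}$. Proposition~\ref{dou} already gives one inequality for free: the $n$-th generalized spheroid lies in $[a_{\gamma,X_n},b_{\gamma,X_n}]$, hence so does the exit point $X_{n+1}=\psi_{\rm OU}^{\pm}(\tau^{\rm OU}_{n+1},X_n)$, which forces $X_{n+1}-a\ge\gamma(X_n-a)$ and $b-X_{n+1}\ge\gamma(b-X_n)$, i.e.
\begin{equation}
\rho_{n+1}\ge\gamma\,\rho_n\quad\text{for every }n,\qquad\text{so that}\qquad\rho_{N_\epsilon}\ge\gamma\epsilon .
\label{eq:lowerrho}
\end{equation}
The core of the proof is the complementary \emph{drift estimate}: there is a constant $c>0$ such that
\begin{equation}
\mathbb{E}\big[\log\rho_{n+1}\,\big|\,\mathcal{F}_n\big]\le\log\rho_n-c\qquad\text{for every }n .
\label{eq:drift}
\end{equation}
Granting \eqref{eq:drift}, the process $M_n:=-\log\rho_{n\wedge N_\epsilon}-c\,(n\wedge N_\epsilon)$ is a submartingale, and optional stopping at the bounded time $n\wedge N_\epsilon$, together with \eqref{eq:lowerrho}, yields $c\,\mathbb{E}[n\wedge N_\epsilon]\le-\log\rho_0-\mathbb{E}[\log\rho_{n\wedge N_\epsilon}]\le|\log\epsilon|+|\log\gamma|+\log\rho_0$. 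Letting $n\to\infty$ gives $\mathbb{E}[N_\epsilon]\le c^{-1}(|\log\epsilon|+|\log\gamma|+\log\rho_0)$, which is $\le\delta|\log\epsilon|$ with $\delta=2/c$ as soon as $\epsilon\le\epsilon_0$ is small enough ($\gamma$ and $\rho_0$ being fixed). So everything reduces to \eqref{eq:drift}.

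To prove \eqref{eq:drift} I would use the self-similarity of one step. By Proposition~\ref{exitimou} and the identity $\tau\overset{d}{=}d_x^2U^2e^{-N^2}$ ($U$ uniform on $[0,1]$, $N$ standard Gaussian, independent), the conditional law of $X_{n+1}$ given $X_n=x$ is that of $e^{-\theta T}\big(x\pm\tfrac{\sigma}{\sqrt{2\theta}}\,\psi_+(e^{2\theta T}-1)\big)$ with $T=\tfrac{1}{2\theta}\log(d_x^2U^2e^{-N^2}+1)$ and an independent fair sign, $d_x$ being the explicit parameter of \eqref{paramou}. Setting $g(x):=\mathbb{E}[\log\rho_1\mid X_0=x]-\log\rho_0$, estimate \eqref{eq:drift} is exactly $\sup_{x\in(a,b)}g(x)<0$. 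I would then show: \emph{(i)} $g$ is continuous on $(a,b)$, since the representation above depends continuously on $x$ (the function $d_x$ is continuous, also across $x=0$) and $\gamma\rho_0\le\rho_1\le(b-a)/2$ keeps $\log\rho_1$ bounded on compact subintervals; \emph{(ii)} $g$ extends continuously to the endpoints: as $x\to a$ or $x\to b$ one has $d_x\sim\kappa\rho_0$ with $\kappa=(1-\gamma)\sqrt{2\theta e}/\sigma$, $e^{-\theta T}\to 1$, and $X_1-x=\pm(1-\gamma)\sqrt e\,\rho_0\sqrt{S\log(1/S)}+O(\rho_0^2)$ with $S=U^2e^{-N^2}$, so that
\[
g(a^+)=g(b^-)=\tfrac12\,\mathbb{E}\!\left[\log\!\big(1-(1-\gamma)^2e\,S\log(1/S)\big)\right]<0 ,
\]
since $(1-\gamma)^2e\,S\log(1/S)\in(0,1)$ a.s.; \emph{(iii)} $g(x)<0$ at every interior $x$. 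By compactness (i)--(iii) give $\sup_{[a,b]}g=\max_{[a,b]}g<0$, hence \eqref{eq:drift} with $c:=-\max_{[a,b]}g$.

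For the pointwise claim \emph{(iii)} I would condition on the fair sign, $\mathbb{E}[\log\rho_1\mid x]=\tfrac12\mathbb{E}[\log\rho_1^{(+)}]+\tfrac12\mathbb{E}[\log\rho_1^{(-)}]$, where $\rho_1^{(\pm)}$ is the boundary distance of $\psi_{\rm OU}^{\pm}(\cdot,x)=\xi\pm\eta$, $\xi=xe^{-\theta T}$, $\eta\ge 0$ (note $\psi_{\rm OU}^++\psi_{\rm OU}^-=2\xi$ and $\eta>0$ a.s.). Bounding each $\rho_1^{(\pm)}$ by $b-\psi_{\rm OU}^{\pm}$ or by $\psi_{\rm OU}^{\pm}-a$ according to which half of $[a,b]$ the exit point falls into, one reaches a pointwise-in-$T$ inequality of the form $\log\rho_1^{(+)}+\log\rho_1^{(-)}\le 2\log\rho_0-\eta^2\,\varphi(T,x)$ with $\varphi>0$, and integrating over $T$ gives $g(x)<0$; concavity of the logarithm is precisely what turns the symmetric $\pm$-spreading of the exit into a strictly negative contribution.

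\emph{Expected main obstacle.} Everything hinges on \eqref{eq:drift}, and within it on the \emph{uniformity in $x$} of the strict negativity of $g$. Contrary to the Brownian case, a generalized spheroid is not symmetric about its starting point — its midpoint drifts from $x$ to $\xi=xe^{-\theta T}$ — so one must make sure this ``tilt'', together with the contraction $e^{-\theta T}<1$, never cancels the concavity gain; this is where the (admittedly rough) bounds of Proposition~\ref{dou} and the exact form of $d_x$ are needed. Producing the clean $x$-uniform asymptotics at the two endpoints via the Brownian scaling limit — the $O(\rho_0^2)$ terms coming from the Ornstein--Uhlenbeck drift being of lower order at small scales — and checking \emph{(iii)} uniformly over the whole interval is the real work; item \emph{(i)} and the submartingale reduction are routine.
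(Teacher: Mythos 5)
Your proposal replaces the paper's construction with the classical Motoo-type Lyapunov argument: take $\rho_n=\min(X_n-a,\,b-X_n)$, prove a uniform one-step drift $\mathbb{E}[\log\rho_{n+1}\mid\mathcal{F}_n]\le\log\rho_n-c$, and conclude by optional stopping. The reduction to the drift estimate is fine (the inequality $\rho_{N_\epsilon}\ge\gamma\epsilon$ and the submartingale bookkeeping are correct, up to a harmless sign slip on the $\log\rho_0$ term), and this route is genuinely different from the paper's: the paper does not use $-\log\rho$ but a bespoke potential $u$ solving $\tfrac{\sigma^2}{2}u''-\theta x u'=-1/\big((x-a)^2(x-b)^2\big)$ (see \eqref{edou}--\eqref{solou}), shows $u(X_n)+cn$ is a supermartingale via It\^o and an occupation-time bound split into the two cases $d_n\le\Delta$ and $d_n>\Delta$, and then extracts the $|\log\epsilon|$ rate from the logarithmic blow-up of $u$ at $a+\epsilon$ and $b-\epsilon$, invoking Proposition \ref{potential}.

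The gap is precisely the drift estimate, and it is not a routine gap: as stated, uniformly over $x\in(a,b)$, it is in doubt, and the pointwise-in-$T$ inequality you propose to derive it from is false. Take $T$ close to the closing time of the spheroid: there $\eta\to 0$ while the midpoint has moved from $x$ to $\xi=xe^{-\theta T}=x/\sqrt{1+\tau}$, so for $x$ on the far side of the equilibrium point $0$ from its nearest endpoint (e.g.\ $0\in(a,b)$ and $\rho_0=b-x$ with $x>0$) one gets $\rho(\xi)>\rho_0$ and
\begin{equation*}
\log\rho_1^{(+)}+\log\rho_1^{(-)}\le \log\big(\rho(\xi)^2-\eta^2\big)\xrightarrow[\eta\to0]{}2\log\rho(\xi)>2\log\rho_0,
\end{equation*}
contradicting a bound of the form $2\log\rho_0-\eta^2\varphi$ with $\varphi>0$. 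Worse, the expected drift itself can have the wrong sign at interior points: for $f(x)=-\log(b-x)$ one computes $Lf(x)=\tfrac{\sigma^2}{2(b-x)^2}-\tfrac{\theta x}{b-x}$, which is negative as soon as $\theta x(b-x)>\sigma^2/2$; for strong mean reversion the walk is pushed \emph{towards} the centre of the interval, $\rho$ increases on average, and the concavity gain $\mathbb{E}[\eta^2]/\rho(\xi)^2$ need not compensate (in the extreme $\theta\to\infty$ regime one step sends $X_1$ near $0$ with small fluctuations, so $\mathbb{E}[\log\rho_1]\approx\log\rho(0)>\log\rho_0$). Your endpoint asymptotics (ii) are correct — near $a$ and $b$ the tilt is $O(\rho_0^2)$ against a gain of order $1$ — so the method is sound in a boundary layer, but claim (iii) fails in the interior in general. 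This is exactly the difficulty the paper's choice of $u$ is designed to absorb: the term $-\theta x u'$ in the defining ODE builds the mean reversion into the Lyapunov function, and the uniform per-step decrease then follows from lower-bounding $\mathbb{E}\big[\int_{T_n}^{T_{n+1}}D_{[a,b]}(X_s)^{-2}\,ds\big]$ rather than from any monotonicity of the distance to the boundary. To salvage your approach you would have to correct $-\log\rho$ by a smooth function killing the interior drift, which essentially reconstructs the paper's $u$.
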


\begin{figure}[H]
  \centerline{\includegraphics[width=7.5cm]{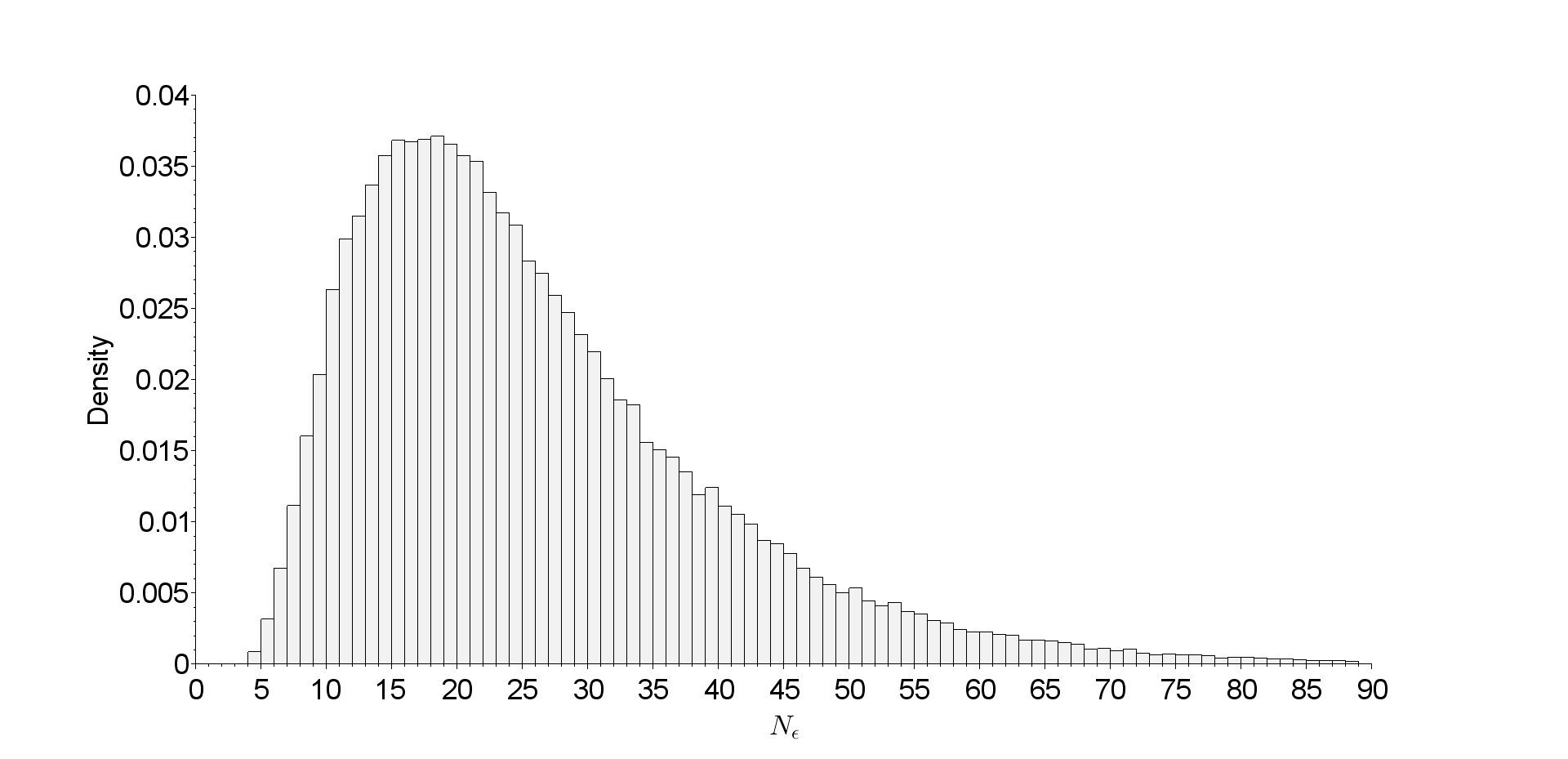}  \includegraphics[width=7.5cm]{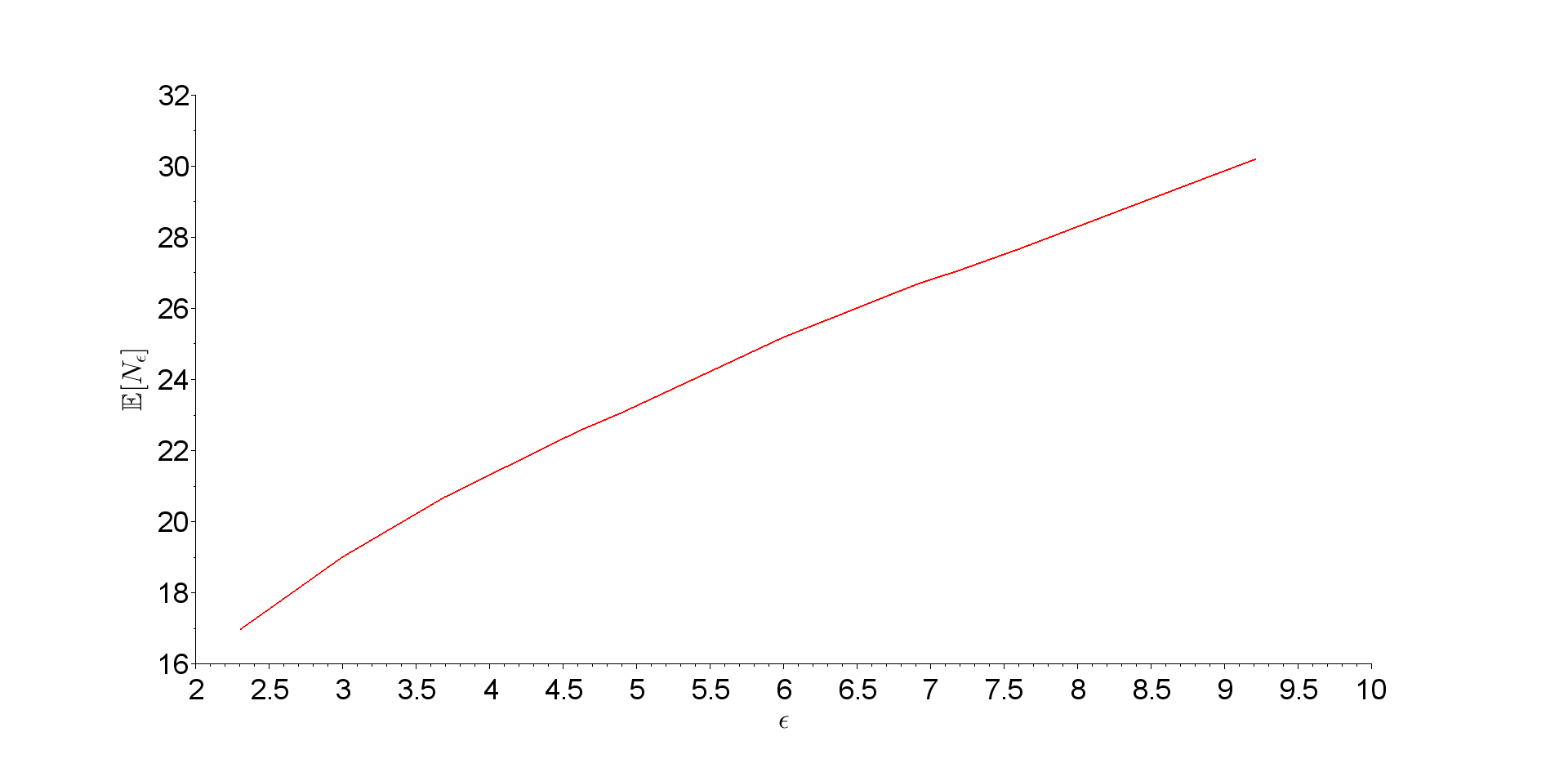}}
   \caption{\small Simulation of the O.U. exit time from the interval $[2,7]$. The starting position is $X_0=5$ and the parameters are given by $\theta =0.1$, $\sigma = 1$ and $\gamma = 10^{-6}$. Histogram of the number of steps observed for $\epsilon = 10^{-3}$ (left) and average number of steps versus $\epsilon$  (right, in logarithmic scale). }
   \label{sinresult}
\end{figure}

The statement is similar to the Brownian motion case, and the proofs are based on similar arguments.
To prove this statement, we introduce a result coming from the potential theory and using Markov chains.\\
Let us consider a Markov chain $(X_n)_{n \in \mathbb{N}}$ defined on a state space $I$  decomposed into two distinct subsets $K$ and $\partial K$, $\partial K$ being  the  so-called frontier. Let us define $N= \inf\{n \in \mathbb{N}, X_n \in \partial K\}$ the hitting time of $\partial K$.
We assume that $N$ is a.s. finite, then the following statement holds:
\begin{proposition} 
\label{potential}
If there exists a function $U$ s.t. the sequence $(U(X_{n\wedge N}))_{n \in \mathbb{N}}$ is non negative and if the sequence $(U(X_{n\wedge N}) + {n\wedge N})_{n \in \mathbb{N}}$ represents a super-martingale adapted to the natural filtration of the considered Markov chain $(X_n)$, then  

\begin{equation*}
\mathbb{E}_x[N] \leqslant U(x), \quad \forall x \in K.
\end{equation*}

\end{proposition}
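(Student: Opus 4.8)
The proof of Proposition~\ref{potential} is a standard application of the optional stopping theorem, so the plan is to turn the super-martingale hypothesis into the stated bound by stopping at $N$ and letting the index go to infinity.

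First I would fix $x \in K$ and work under $\mathbb{P}_x$. By hypothesis the process $M_n := U(X_{n\wedge N}) + (n\wedge N)$ is a super-martingale for the natural filtration $(\mathcal{F}_n)$ of the Markov chain, so $\mathbb{E}_x[M_n] \leqslant \mathbb{E}_x[M_0] = U(x)$ for every $n$ (here $M_0 = U(X_0) + 0 = U(x)$ since $0 \wedge N = 0$ and $X_0 = x$). Since $U(X_{n\wedge N}) \geqslant 0$, this immediately yields $\mathbb{E}_x[n\wedge N] \leqslant \mathbb{E}_x[M_n] \leqslant U(x)$ for every $n \in \mathbb{N}$.

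Next I would pass to the limit $n \to \infty$. The sequence $(n\wedge N)_{n}$ is nondecreasing and, because $N$ is assumed a.s. finite, converges almost surely to $N$; by the monotone convergence theorem $\mathbb{E}_x[n\wedge N] \to \mathbb{E}_x[N]$. Combining this with the uniform bound from the previous step gives $\mathbb{E}_x[N] = \lim_{n\to\infty}\mathbb{E}_x[n\wedge N] \leqslant U(x)$, which is exactly the claim. Since $x \in K$ was arbitrary, the proposition follows.

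There is essentially no obstacle here: the only mild subtlety is that one must use the \emph{stopped} process $M_{n} = U(X_{n\wedge N}) + (n\wedge N)$ directly as a super-martingale rather than invoking optional stopping on an unstopped process, so that no integrability or uniform integrability condition on $N$ is needed beyond its almost-sure finiteness — the non-negativity of $U(X_{n\wedge N})$ does all the work in bounding $n \wedge N$, and monotone convergence handles the limit without requiring $\mathbb{E}_x[N] < \infty$ a priori (indeed the conclusion $\mathbb{E}_x[N] \leqslant U(x)$ also establishes finiteness when $U(x) < \infty$).
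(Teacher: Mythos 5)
Your proof is correct and follows exactly the route the paper indicates: the paper leaves this proof to the reader, noting only that it rests on the optional stopping (super-martingale) property and the monotone convergence theorem, which is precisely the argument you carry out. No gaps; the observation that non-negativity of $U(X_{n\wedge N})$ substitutes for any integrability condition on $N$ is the right one.
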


The proof of this classical upper-bound is left to the reader, it is essentially based on the optimal stopping theorem and on the monotone convergence theorem (see, for instance,\cite{norris}, p139). 
\begin{proof}[Proof of Theorem \ref{meanou}]~\\
\textbf{Step 1.} Let us first introduce a function $u$ which plays an important role in the construction of a super-martingale linked to the random walk.\\
We consider the following differential equation:\\
\begin{equation}
\frac{\sigma^2}{2}u'' - \theta x u' = \frac{-1}{(x-a)^2(x-b)^2}, \quad \text{for } x\in ]a,b[.
\label{edou}
\end{equation}
This second order differential equation can be solved in a classical way. Let us first solve the related homogeneous equation: we obtain
\begin{equation*}
u'(x) = C(x)e^{\frac{2\theta}{\sigma^2}x}.
\end{equation*} 
The method of variation of parameters leads to
\begin{equation*}
C(x) = -\frac{2}{\sigma^2}\int^x_0 \frac{e^{-\frac{2\theta}{\sigma^2}s}}{(s-a)^2(s-b)^2}ds.
\end{equation*}
Integrating $u'$ one more time implies an explicit expression of one particular solution \eqref{edou}.
\begin{equation}
u(x) = -\frac{2}{\sigma^2} \int^x_0 e^{\frac{2\theta}{\sigma^2}u}\int^u_0 \frac{e^{-\frac{2\theta}{\sigma^2}s}}{(s-a)^2(s-b)^2}ds du, \quad \text{for } x\in]a,b[.
\label{solou}
\end{equation}
\textbf{Step 2.} We consider now the sequence $(T_n, X_n)_{n \in \mathbb{N}}$ of cumulative exit times, i.e. 
\begin{equation}
T_n = \displaystyle{\sum_{k=1}^n \tau_k^{\rm OU}}
\label{defTnOU}
\end{equation}
 and exit location given by the WOMS algorithm for the Ornstein-Uhlenbeck process.

Let us introduce $Z_n = u( X_n) + cn$ where $c$ is a positive constant (which shall be determined in the following calculus) and $u$ is the function detailed in Step 1 of the proof. We shall prove that this process is a super-martingale with respect to the filtration $(\mathcal{F}_{T_n})_{n \in \mathbb{N}}$ induced by $(\mathcal{F}_t)$, the natural filtration of the Brownian motion $(V_t)_{t\geqslant 0}$ enlightened in \eqref{eqou}.\\
By Itô's formula we obtain

\begin{align}
\mathbb{E}[Z_{n+1} - Z_n \vert \mathcal{F}_{T_n}] &= \mathbb{E}[M_{n+1} - M_n \vert \mathcal{F}_{T_n}]\nonumber \\ & + \mathbb{E}\left.\left[\int^{T_{n+1}}_{T_n} \frac{\sigma^2}{2}u"(X_s) - \theta X_s u'(X_s)ds \right\vert \mathcal{F}_{T_n}\right] +c \nonumber\\
&= \mathbb{E}\left.\left[\int^{T_{n+1}}_{T_n} \frac{-1}{D_{[a,b]}(X_s)^2}ds \right\vert \mathcal{F}_{T_n}\right] + c.
\label{diffz}
\end{align}
where $(M_n)_{n \in \mathbb{N}}= \left( \int^{T_n}_0 \sigma u'(X_s)dW_s\right)_{n \in \mathbb{N}}$ is a martingale and $D_{[a,b]}(x) = (x-a)(b-x)$ for $x\in [a,b]$.
Remark now that

\begin{equation}
\Xi(X_n):=\mathbb{E}\left.\left[\int^{T_{n+1}}_{T_n} \frac{-1}{D_{[a,b]}(X_s)^2}ds \right\vert \mathcal{F}_{T_n}\right] = \mathbb{E}\left.\left[\int^{\tau^{\rm OU}_{n+1}}_0 \frac{-1}{D_{[a,b]}(\tilde{X}_s)^2}ds \right\vert \mathcal{F}_{T_n}\right]
\end{equation}
where $ \tilde{X}_s:=X_{T_n+s}$ has the same distribution as the Ornstein-Uhlenbeck starting in $X_n$. We now upper bound this term: we consider in a first time that $X_{n}$ is positive.  By Proposition \ref{dou} we are then allowed to compute the corresponding coefficient $d_{X_n}$ which we denote by $d_n>0$ for notation simplicity. Let us fix some parameter $\Delta\in]0,1[$.\\
\textbf{\underline{First case:}}  $d_n\leqslant \Delta$, that is satisfied either if
 
\begin{equation}
0<(b_{\gamma} -X_{n})\frac{\sqrt{2 \theta e}}{\sigma} \leqslant \Delta 
\end{equation}
or
\begin{equation}
0<\frac{2(X_{n}-a_{\gamma})}{\sqrt{\frac{\sigma^2}{2 \theta e}+ 2X_{n}(X_{n}-a_{\gamma})} + \frac{\sigma}{\sqrt{2 \theta e}}} \leqslant \Delta
\end{equation}
with $b_{\gamma} = b_{\gamma,X_{n}}$ and $a_{\gamma} = a_{\gamma,X_{n}}$.\\
We first consider that $X_{n}$ is close enough to $b_{\gamma}$. Using \eqref{boundou}, we have for any $t \in \text{Supp}\,(\psi_{OU}^{\pm}) = \left[0, \frac{\log(1+ d_n^2)}{2 \theta}\right]$:
\begin{equation*}
b - \psi_-^{OU}(t,X_n) \leqslant  b - \frac{X_{n}}{\sqrt{1+ d_n^2}} + \frac{\sigma d_n}{\sqrt{2 \theta e}} \leqslant b - X_{n}\left(1-\frac{d_n^2}{2}\right)+ \frac{\sigma d_n}{\sqrt{2 \theta e}}.\\
\end{equation*}
Since  $d_n\le \Delta<1$, we have $d_n^2\le d_n$. Moreover  $X_n\le b_\gamma$ so that
\begin{align*}
b - \psi_-^{OU}(t,X_n) &\leqslant b - X_{n}\left(1-\frac{d_n}{2}\right)+ \frac{\sigma d_n}{\sqrt{2 \theta e}}\\
 &= b - X_{n} + d_n\left(\frac{X_{n}}{2} + \frac{\sigma}{\sqrt{2 \theta e}}\right)\\
 &\leqslant (b - X_{n})\left(\frac{b\sqrt{2\theta e}}{2\sigma} +2\right)=: (b-X_{n})\beta.
\end{align*}
The last upper-bound uses the definition of $d_n$ presented in Proposition \ref{dou} Hence we have 
\begin{equation*}
D_{[a,b]}(\tilde{X}_s)\leqslant \beta(b-a)(b-X_{n}).
\end{equation*}
We then write, using the fact that $\tau^{\rm OU}_{n+1}$ is independent of $\mathcal{F}_{T_n}$,

\begin{align*}
\Xi(X_n) &\leqslant \mathbb{E}\left.\left[\int^{\tau^{\rm OU}_{n+1}}_0 \frac{-1}{\beta^2(b-a)^2(b-X_{n})^2}ds \right\vert \mathcal{F}_{T_n}\right]\\
&= \frac{-1}{\beta^2(b-a)^2(b-X_{n})^2} \, \mathbb{E}[\tau^{\rm OU}_{n+1}] \\
&=\frac{-1}{2\theta\beta^2(b-a)^2(b-X_{n})^2} \, \mathbb{E}[\log(1 + \tau_n)],
\end{align*}
where $\tau_n$ denotes the exit time for Brownian motion from the spheroid of parameter $d_n$. If $\tau$ denotes the Brownian exit time of the generalized spheroid of normalized size ($d=1$), then the scaling property of Brownian motion implies that $\tau_n$ and $d^2_n \tau$ are identically distributed. Hence, noticing that $\tau \leqslant 1$ and recalling that $d_n^2 \leqslant 1$, we obtain

\begin{align*}
\Xi(X_n) &\leqslant \frac{-1}{2\theta\beta^2(b-a)^2(b-X_{n})^2} \, \mathbb{E}[\log(1 + d_n^2 \tau)]\\
&\leqslant\frac{-d_n^2}{4\theta\beta^2(b-a)^2(b-X_{n})^2} \, \mathbb{E}[\tau].
\end{align*} 
In the considered case, we know that  
\begin{equation}
d_n  = (b-X_{n})\frac{\sqrt{2 \theta e}}{\sigma}
\end{equation}
which implies
\begin{equation}
\Xi(X_n) \leqslant\frac{-e}{2\sigma^2\beta^2(b-a)^2}\mathbb{E}[\tilde{\tau}_1].
\label{petitdn}
\end{equation} 
In the other case ($X_{n}$ close to $a$) the arguments already used just above lead to a similar upper-bound. We observe for any  $t \in \left[0, \frac{\log(1+ d_n^2)}{2 \theta}\right]$:
\begin{align*}
\psi_+^{OU}(t,X_n) - a &\leqslant X_{n} + \frac{\sigma d_n}{\sqrt{2 \theta e}} - a\\
&\leqslant (X_{n}-a)\left(1 + \frac{2\sigma}{\sqrt{\sigma^2+4 \theta eX_{n}(X_{n}-a)} + \sigma}\right) \leqslant 2(X_{n}-a).
\end{align*}
This upper bound leads to the same result as \eqref{petitdn} just replacing $\beta$ by another positive constant $\tilde{\beta}$. Combining both inequalities, for $d_n$ smaller than $\Delta$, we get 
\begin{equation}
 \Xi(X_n) \leqslant \frac{-1}{\sigma^2\max(\tilde{\beta},\beta)^2} \, \mathbb{E}[\tau].
 \label{boundbeta}
\end{equation}
\textbf{\underline{Second case:}} $d_n > \Delta$\\
In this case, we use the upper-bound:
\begin{equation}
D_{[a,b]}(\tilde{X}_s)\leqslant (b - a)^2.
\end{equation}
We deduce
\begin{align}
\Xi(X_n) &\leqslant \mathbb{E}\left.\left[\int^{\tau^{\rm OU}_{n+1}}_0 \frac{-1}{(b-a)^4}ds \right\vert \mathcal{F}_{T_n}\right]\nonumber\\
&\leqslant \frac{-1}{2\theta(b-a)^4} \, \mathbb{E}[\log(1 + \Delta^2 \tilde{\tau}_1)]\leqslant\frac{-\Delta^2}{4\theta(b-a)^4} \, \mathbb{E}[\tau].
\label{boundelta}
\end{align}
Both inequalities \eqref{boundbeta} and \eqref{boundelta} suggest the existence of  a constant $\tilde{c}>0$ such that $\Xi(X_n) \leqslant -\tilde{c}$.\\
Finally, using the symmetry property of the considered spheroid, the case $x$ negative is treated similarly, leading to a positive constant $c$ such that
\begin{equation}
\Xi(X_n) \le \mathbb{E}\left.\left[\int^{T_{n+1}}_{T_n} \frac{-1}{D_{[a,b]}(X_s)^2}ds \right\vert \mathcal{F}_{T_n}\right] \leqslant -c, \text{ for all } n \geqslant 0.
\label{cfinal}
\end{equation}
In conclusion, the stochastic process $Z_n=u(X_n)+cn$ is a super-martingale due to the combination of \eqref{diffz} and \eqref{cfinal}.\\ 
Step 3. In order to apply the optimal stopping theorem described in Proposition 2.6., we need on one hand that $(U(X_n)+cn)_{n\geqslant 0}$ is a super-martingale but also on the other hand that $(U(X_n))_{n \geqslant 0}$ is a non negative sequence. For the first property we could choose $U=u+\kappa$, $u$ being the function introduced in \eqref{solou} and $\kappa$ a constant. For the second property we need to have a non negative sequence, so we have to choose in a suitable way the constant $\kappa$. Let us note that the function $u$ satisfies $u(0)=0$ and is a concave function. So in order to obtain a positive function on the interval $[a_{\gamma,x}, b_{\gamma,x}]$ it suffices to choose $\kappa=-\min(u(b_{\gamma,x}), u(a_{\gamma,x}))$.\\
Consequently we need to study the behavior of $u$ at the frontiers of $[a_{\gamma,x},b_{\gamma,x}]$ that is for $x = b-\epsilon$ and $x = a + \epsilon$. Putting $b_{\gamma} := b_{\gamma, b - \epsilon}$, we obtain
\begin{align*}
u(b_{\gamma}) &= -\frac{2}{\sigma^2} \int^{b_{\gamma}}_0 e^{\frac{2\theta}{\sigma^2}u}\int^u_0 \frac{e^{-\frac{2\theta}{\sigma^2}s}}{(s-a)^2(s-b)^2}ds\, du\\
&=-\frac{2}{\sigma^2} \int^{b_{\gamma}}_0 \frac{e^{-\frac{2\theta}{\sigma^2}s}}{(s-a)^2(s-b)^2} \int^{b_{\gamma}}_s e^{\frac{2\theta}{\sigma^2}u}ds\,du\\
&= -\frac{1}{\theta} \int^{b_{\gamma}}_0 \frac{e^{\frac{2\theta}{\sigma^2}(b_{\gamma, b - \epsilon}-s)}-1}{(s-a)^2(s-b)^2} ds.
\end{align*}
Using Taylor's expansion
\begin{equation*}
e^{\frac{2\theta}{\sigma^2}(b_{\gamma}-s)}-1 = \frac{2\theta}{\sigma^2}(b_{\gamma} - s) + \frac{2 \theta^2 e^{\frac{2\theta \xi}{\sigma^2}}}{\sigma^4} (b_{\gamma} - s)^2,
\end{equation*}
where $\xi$ belongs to $[0,b_{\gamma}-s]$.
Moreover
\begin{equation}
\frac{1}{(s-a)^2(s-b)^2} = \frac{c_1}{(s-a)} + \frac{c_2}{(s-a)^2}+ \frac{c_3}{(b-s)}+ \frac{c_4}{(b-s)^2},
\end{equation}
where $c_i, i\in\{1,2,3,4\}$ are positive constants and $c_2 = c_4 = \displaystyle{\frac{1}{(b-a)^2}}$.

\begin{align*}
u(b_{\gamma, b - \epsilon}) &= -\frac{2}{\sigma^2} \int^{b_{\gamma}}_0 \frac{(b_{\gamma} - s)}{(s-a)^2(s-b)^2} ds -  \frac{2\theta}{\sigma^4} \int^{b_{\gamma}}_0 \frac{e^{\frac{2\theta \xi}{\sigma^2}}(b_{\gamma} - s)^2}{(s-a)^2(s-b)^2} ds \\
&= -\frac{2}{\sigma^2} \Big(c_1I_{0,1,1}-c_2 I_{0,2,1}-c_3 I_{1,0,1}-c_4 I_{2,0,1}\\
&\quad \quad \quad \quad+  \frac{\theta}{\sigma^2} \int^{b_{\gamma}}_0 \frac{e^{\frac{2\theta \xi}{\sigma^2}}(b_{\gamma} - s)^2}{(s-a)^2(b-s)^2} ds\Big),
\end{align*}
where $I_{i,j,k} = \displaystyle{\int^{b_{\gamma}}_0 \frac{(b_{\gamma} - s)^k}{ (b-s)^i(s-a)^j} ds}$.
We can notice that
\begin{equation}
I_{2,0,1} = \log(\gamma\epsilon) - \log(\vert b \vert) + 1 -\frac{\gamma \epsilon}{b} 
\label{I}
\end{equation}
and
\begin{equation}
c_1I_{0,1,1}+c_2 I_{0,2,1}+c_3 I_{1,0,1} = o(1) \text{ as $\epsilon$ tends to $0$.}
\label{equI}
\end{equation}
Let us bound the last integral, using once again the partial fraction decomposition 
\begin{align*}
0 \leqslant \frac{\theta}{\sigma^2} \int^{b_{\gamma}}_0 \frac{e^{\frac{2\theta \xi}{\sigma^2}}(b_{\gamma} - s)^2}{(s-a)^2(b-s)^2} ds \leqslant \frac{\theta}{\sigma^2} e^{\frac{2\theta b}{\sigma^2}}\int^{b_{\gamma}}_0 \frac{(b_{\gamma} - s)^2}{(s-a)^2(b-s)^2} ds\\
= \frac{\theta}{\sigma^2} e^{\frac{2\theta b}{\sigma^2}}\left(c_4 I_{0,1,2}+c_5 I_{0,2,2} +c_6 I_{1,0,2}+ c_7 I_{2,0,2} \right).
\end{align*}
As in the previous computations, it is possible to take an equivalent as $\epsilon$ tends to zero, that is there exists $\delta >0$ such that
\begin{equation}
\frac{2\theta}{\sigma^4} e^{\frac{2\theta b}{\sigma^2}}(c_4 I_{0,1,2}+ c_5 I_{0,2,2} + c_6 I_{1,0,2}+ c_7 I_{2,0,2}) \sim \delta.
\end{equation}
Combining \eqref{I} and \eqref{equI}, and taking an equivalent when $\epsilon$ tends to $0$ leads to state that 
\begin{equation}
u(b_{\gamma, b - \epsilon}) \sim -D \log(\gamma\epsilon) -\tilde{D}-  \delta, \text{ where } D = \displaystyle{\frac{2}{\sigma^2(b-a)^2}}.
\end{equation}
A similar computation on $u(a_{\gamma,a + \epsilon})$ gives us some $\tilde{D}'$ and $\tilde{\delta}$. Setting $\kappa = \max(\tilde{D}+\delta, \tilde{D}'+\tilde{\delta})$ and $U(x) = u(x) - \kappa$ permits to obtain the positivity of the sequence $(U(X_n))_{n\ge 1}$.\\
Step 4. The statement of the theorem is a direct consequence of the optimal stopping theorem Proposition 2.6. If $N_\epsilon$ is almost surely finite, then 
\begin{equation}
\mathbb{E}[N_{\epsilon}] \leqslant \frac{1}{c}\mathbb{E}[U(X_0)] \leqslant D \log(\epsilon).
\end{equation}
In order to finish the proof, it remains to justify that $N_\epsilon$ is almost surely finite. Since $b_{\gamma,x}-x\ge (1-\gamma)\epsilon$ and $x-a_{\gamma,x}\ge (1-\gamma)\epsilon$ for any $x\in[a+\epsilon, b-\epsilon]$, we deduce that there exists a strictly positive lower-bound $d_\epsilon$ such that $d_{X_n}\ge d_\epsilon$ for any $n$. Introducing $(s_n)$ a sequence of independent and identically distributed random variables corresponding to Brownian exits of a unit spheroid, we deduce that $T_n$ is stochastically lower-bounded by
\[
S_n:=\frac{1}{2\theta}\sum_{k=1}^n\log(1+d_\epsilon s_k). 
\] 
Moreover $S_n$ tends to infinity almost surely as $n\to\infty$. By Lemma \ref{TfiniteOU} and by construction, $T_n$ is stochastically inbetween $S_n$ and $\mathcal{T}$ (an almost surely finite random variable) for any $n\le N_\epsilon$. The stopping rule $N_\epsilon$ is therefore almost surely finite. 
\end{proof}

\begin{lemma}
\label{TfiniteOU}
The sequence of cumulative times $(T_n)_{n\geqslant 1}$ appearing in the algorithm  and defined by \eqref{defTnOU} are stochastically smaller than $\mathcal{T}$ the first exit time of the Ornstein-Uhlenbeck process.
\end{lemma}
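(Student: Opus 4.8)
The plan is to build a pathwise coupling between the algorithm and a single realization of the Ornstein–Uhlenbeck process, and to read off the stochastic domination from it. Concretely, I would realize the OU process $(X_t)_{t\ge 0}$ started at $x_0$ on the same probability space as the random walk, in such a way that the successive pairs $(T_n,X_n)$ produced by the algorithm lie on the graph of $(X_t)$. This is exactly what Proposition \ref{exitimou} provides: starting from $(T_0,X_0)=(0,x_0)$, the Brownian exit time $\tau_1$ of the spheroid with parameter $d_{X_0}$ generated in the first step corresponds, through the time change $u\mapsto e^{2\theta t}-1$ encoded in \eqref{eqou}, to the exit time $\tau_1^{\rm OU}=\frac{\log(\tau_1+1)}{2\theta}$ of $(X_t)$ from the generalized spheroid with boundaries $\psi_{\rm OU}^{\pm}(\cdot,X_0)$, while $X_1=X_{\tau_1^{\rm OU}}$ is its exit location. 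By the strong Markov property of the OU process, $(X_{T_1+t})_{t\ge 0}$ is again an OU process started at $X_1$, so the same identification can be iterated at the times $T_n=\sum_{k=1}^n\tau_k^{\rm OU}$, which are thus genuine exit times of generalized spheroids along the path of $(X_t)$.

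The key geometric input is Proposition \ref{dou}: for each $n$, the generalized spheroid with parameter $d_{X_n}$ and initial point $X_n$ is contained in $[a_{\gamma,X_n},b_{\gamma,X_n}]$. Since $\gamma\in(0,1)$ (so that $[a_{\gamma,x},b_{\gamma,x}]$ is a nonempty subinterval of $[a,b]$), one has $a<a_{\gamma,x}\le b_{\gamma,x}<b$ for every $x\in(a,b)$; hence this spheroid actually lies in the open interval $(a,b)$, and in particular its exit location $X_{n+1}$ again belongs to $(a,b)$, which lets the construction proceed to the next step. Consequently, on the time interval $[T_n,T_{n+1}]$ the path $(X_t)$ remains inside the generalized spheroid, therefore inside $(a,b)$.

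An immediate induction on $n$ then gives $X_t\in(a,b)$ for every $t\in[0,T_n]$ and every $n$ for which $T_n$ is defined. In particular $X_t\in[a,b]$ on $[0,T_n]$, whence $\mathcal{T}=\inf\{t\ge 0:\ X_t\notin[a,b]\}\ge T_n$ almost surely under this coupling (and, using continuity of the paths together with $X_{T_n}\in(a,b)$, even $\mathcal{T}>T_n$). Since in this construction $T_n$ has the law of the $n$-th cumulative exit time of the algorithm and $\mathcal{T}$ has the law of the OU first exit time from $[a,b]$, the existence of a coupling with $T_n\le\mathcal{T}$ a.s. is precisely the assertion that $T_n$ is stochastically smaller than $\mathcal{T}$.

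The only delicate point is the first paragraph: one must check that the randomness consumed by the algorithm — the Brownian exit times $\tau_n$ of normalized spheroids and the Bernoulli exit-side choices — can be synchronized with a single OU trajectory via the time change of \eqref{eqou} and Proposition \ref{exitimou}, and that the strong Markov property legitimizes the iteration at the random times $T_n$. Once this coupling is in place, the inclusion of the generalized spheroids in $(a,b)$ furnished by Proposition \ref{dou} makes the domination essentially automatic.
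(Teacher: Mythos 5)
Your proposal is correct and follows essentially the same route as the paper: both arguments couple the algorithm's sequence $(T_n,X_n)$ with the skeleton of a single Ornstein--Uhlenbeck trajectory (via Proposition \ref{exitimou}, the strong Markov property, and the inclusion of the generalized spheroids in $(a,b)$ given by Proposition \ref{dou}), and then read off $T_n\le\mathcal{T}$ pathwise. You simply make explicit the steps that the paper's proof leaves as ``by construction.''
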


\begin{proof}
We need to emphasize the link between the markov chain induced by the algorithm, denoted $((T_n, X_n))_{n\in \mathbb{N}}$ with $(T_0, X_0) = (0,0)$,  and a path of the Ornstein-Ulhenbeck process.\\
At the starting point of the Ornstein-Uhlenbeck trajectory, we introduce a spheroid of maximum size contained in the interval $[a,b] \times \mathbb{R}_+$. The intersection of this spheroid and the path corresponds to the point $(t_1,z_1)$. Then  this construction leads us to state that $(t_1,z_1)$ has the same distribution as $(T_1, X_1)$. Hence, from $(t_1,z_1)$ we can construct a maximum size spheroid and consider the intersection $(t_2,z_2)$ between the trajectory after $t_1$ and this second spheroid. Once again we get from the construction that  $(t_2,z_2)$ and $(T_2, X_2)$ are identically distributed. We can therefore step by step build a sequence $((t_n, z_n))_{n\in \mathbb{N}}$ of intersections between the considered trajectory and the spheroids. We obtain that the skeleton of the trajectory $(t_n,z_n)_{n\in \mathbb{N}}$ and the sequence $(T_n,X_n)_{n\in \mathbb{N}}$ are identically distributed.
 By construction, we also note that $t_n \leqslant \mathcal{T}$ for all $n \in \mathbb{N}$, which implies the announced result.
\end{proof}

\subsubsection*{Bounds for the Exit-Time distribution }

Let us now precise the rate of convergence for the algorithm based on the random walk. We should describe how far the outcome of the algorithm and the diffusion exit time are. We recall that the outcome depends on the parameter $\epsilon$. 

\begin{thm}\label{thm:bounds}
We consider $0<\gamma<2$ and $\delta = \epsilon^{\gamma}$. We denote by $F$ the cumulative distribution function of the exit time from the interval $[a,b]$ and $F_{\epsilon}$ the distribution function of the algorithm outcome. Then for any $\rho>1$, there exists $\epsilon_0>0$ such that
\begin{equation}
\left( 1 - \frac{\rho\sqrt{\theta}(\epsilon + \max(\vert a \vert,\vert b\vert) (e^{\theta \delta} -1))}{\sigma\sqrt{ (e^{2\theta \delta} -1) \pi}}\right)F_{\epsilon}(t -\delta) \leqslant F(t) \leqslant F_{\epsilon}(t),
\end{equation}
for all $t\in\mathbb{R}$ and $\epsilon\le \epsilon_0$.
\end{thm}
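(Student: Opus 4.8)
The right-hand inequality $F(t)\le F_\epsilon(t)$ is the cheap half, and I would dispatch it first: by Lemma~\ref{TfiniteOU} the algorithm outcome $\mathcal T_\epsilon=T_{N_\epsilon}$ is stochastically dominated by $\mathcal T$, so $\mathbb P(\mathcal T>t)\ge\mathbb P(\mathcal T_\epsilon>t)$, which is exactly $F(t)\le F_\epsilon(t)$. In fact, in the coupling used in the proof of that lemma the skeleton points $(T_n,X_n)$ lie on a genuine trajectory that has not yet left $[a,b]$, so $T_{N_\epsilon}\le\mathcal T$ pathwise.

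For the left-hand inequality the plan is to quantify the overshoot between the instant the walk is stopped and the true exit. I would work in the coupling above (recall that $N_\epsilon$ is a.s.\ finite, as shown at the end of the proof of Theorem~\ref{meanou}). The stopping rule places $X_{N_\epsilon}$ in the set $E_\epsilon:=\{y\in(a,b):\min(b-y,y-a)<\epsilon\}$, and, since the trajectory has not left $[a,b]$ by time $T_{N_\epsilon}$, the strong Markov property of the Ornstein--Uhlenbeck process at the stopping time $T_{N_\epsilon}$ gives $\mathcal T=T_{N_\epsilon}+R$, where conditionally on $\mathcal F_{T_{N_\epsilon}}$ the variable $R$ is the exit time from $[a,b]$ of an Ornstein--Uhlenbeck process started at $X_{N_\epsilon}$. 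From $\{T_{N_\epsilon}\le t-\delta\}\cap\{R\le\delta\}\subseteq\{\mathcal T\le t\}$ and conditioning on $\mathcal F_{T_{N_\epsilon}}$ I obtain
\[
F(t)\ \ge\ \mathbb E\!\left[\ind{\{T_{N_\epsilon}\le t-\delta\}}\,\mathbb P\big(R\le\delta\mid\mathcal F_{T_{N_\epsilon}}\big)\right]\ \ge\ F_\epsilon(t-\delta)\Big(1-\sup_{y\in E_\epsilon}\mathbb P_y\big(\tau_{[a,b]}>\delta\big)\Big),
\]
so it remains to bound $\sup_{y\in E_\epsilon}\mathbb P_y(\tau_{[a,b]}>\delta)$ by the quantity subtracted from $1$ in the statement.

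This last estimate is the heart of the matter. Take $y$ with $0<b-y<\epsilon$ (the sub-case $y-a<\epsilon$ being symmetric after swapping the roles of $a$ and $b$). Since exit occurs as soon as the process reaches $b$, the representation \eqref{eqou} with initial value $y$ shows, after the substitution $u=e^{2\theta s}-1$ and with $U:=e^{2\theta\delta}-1$,
\[
\{\tau_{[a,b]}>\delta\}\ \subseteq\ \Big\{V_u<g(u)\ \text{for all }u\in[0,U]\Big\},\qquad g(u):=\frac{\sqrt{2\theta}}{\sigma}\big(b\sqrt{1+u}-y\big).
\]
On $[0,U]$ the curve $g$ is monotone (with the sign of variation of $b$); using $\sqrt{1+u}-1\le\sqrt{1+U}-1=e^{\theta\delta}-1$ together with $0\le b-y<\epsilon$ one gets the uniform bound $g(u)\le H:=\frac{\sqrt{2\theta}}{\sigma}\big(\epsilon+\max(|a|,|b|)(e^{\theta\delta}-1)\big)$. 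Hence the event above lies in $\{\sup_{u\le U}V_u<H\}$, and the reflection principle gives $\mathbb P_y(\tau_{[a,b]}>\delta)\le 2\Phi(H/\sqrt U)-1$, with $\Phi$ the standard normal distribution function. Finally $2\Phi(x)-1=\int_{-x}^{x}\varphi(t)\,\dd t\le x\sqrt{2/\pi}$, and since $\delta=\epsilon^{\gamma}$ with $\gamma<2$ forces $H/\sqrt U\to 0$ as $\epsilon\to0$, the lower-order terms in the Gaussian expansion can be absorbed by taking $\epsilon_0$ small; substituting the values of $H$ and $U$ then produces a bound of exactly the announced form with the free parameter $\rho$, which combined with the displayed lower bound for $F(t)$ finishes the proof.

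The main obstacle is precisely this curve-crossing estimate: one must correctly reduce the Ornstein--Uhlenbeck exit from a point near the boundary to a Brownian first-passage problem over the moving curve $g$, bound $g$ uniformly on the short window $[0,U]$ by an \emph{explicit} constant while tracking the signs of $a$ and $b$ (this is where the term $\max(|a|,|b|)(e^{\theta\delta}-1)$, and the grouping $\epsilon+\max(|a|,|b|)(e^{\theta\delta}-1)$, enters), and then sharpen the reflection bound into the precise Gaussian-tail constant of the statement. The strong-Markov overshoot decomposition and the stochastic-domination half are routine by comparison.
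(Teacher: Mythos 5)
Your proof follows essentially the same route as the paper's: the same coupling of the algorithm's skeleton with a genuine trajectory (which gives $F(t)\le F_\epsilon(t)$ immediately), the same decomposition isolating $F_\epsilon(t-\delta)\,\sup_y\mathbb{P}_y(\mathcal{T}>\delta)$ via the strong Markov property at the stopping time, and the same reduction of the near-boundary survival probability to a Brownian curve-crossing estimate through the time change $s=e^{2\theta u}-1$, finished by the reflection principle. The one genuine difference is that for $y\in[b-\epsilon,b]$ you bound $\mathbb{P}_y(\mathcal{T}>\delta)\le\mathbb{P}_y(\mathcal{T}_b>\delta)$ directly, using $\mathcal{T}=\mathcal{T}_a\wedge\mathcal{T}_b$, whereas the paper splits this probability into $\mathbb{P}_y(\mathcal{T}_a<\mathcal{T}_b)+\mathbb{P}_y(\mathcal{T}_b>\delta)$ and controls the first term with the scale function \eqref{scalou}, producing an extra $\epsilon\,C_{a,b}$ that must then be absorbed into $\rho$. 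Your shortcut is valid and slightly cleaner.

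One quantitative point does not go through as you claim. The reflection principle gives $\mathbb{P}\bigl(\sup_{u\le U}V_u<H\bigr)=\mathbb{P}(|V_U|<H)\le \sqrt{2/\pi}\,H/\sqrt{U}$, and with $H=\frac{\sqrt{2\theta}}{\sigma}\bigl(\epsilon+\max(|a|,|b|)(e^{\theta\delta}-1)\bigr)$ and $U=e^{2\theta\delta}-1$ this equals
\begin{equation*}
\frac{2\sqrt{\theta}\bigl(\epsilon+\max(|a|,|b|)(e^{\theta\delta}-1)\bigr)}{\sigma\sqrt{\pi\,(e^{2\theta\delta}-1)}},
\end{equation*}
which is \emph{twice} the error term in the statement. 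Since this is the leading order of $2\Phi(H/\sqrt{U})-1$, no choice of small $\epsilon_0$ absorbs the discrepancy when $\rho$ is close to $1$; as written, your argument proves the bound only for $\rho>2$. You should be aware that the paper's own proof writes $\mathbb{P}_0\bigl(2|V_{e^{2\theta\delta}-1}|<H\bigr)$ at this step, which is not what the reflection principle yields, so the same factor of $2$ is at issue there as well; your computation is the honest one, but it does not reproduce the constant exactly as stated, contrary to what your last paragraph asserts.
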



In other words, the precision of the approximation pointed out in Theorem \ref{thm:bounds} is characterized by the following error bound:
\[
\Xi(\epsilon\,;\theta,\sigma,a,b,\gamma):=\frac{\sqrt{\theta}(\epsilon + \max(\vert a \vert,\vert b\vert) (e^{\theta \delta} -1))}{\sigma\sqrt{ (e^{2\theta \delta} -1) \pi}},\quad \mbox{with}\ \delta=\epsilon^\gamma.
\]
 Figure 5 presents the dependence of this bound with respect to $\epsilon$ and $\theta$,  all other parameters being fixed.

\begin{figure}[H]
   \includegraphics[width=\textwidth]{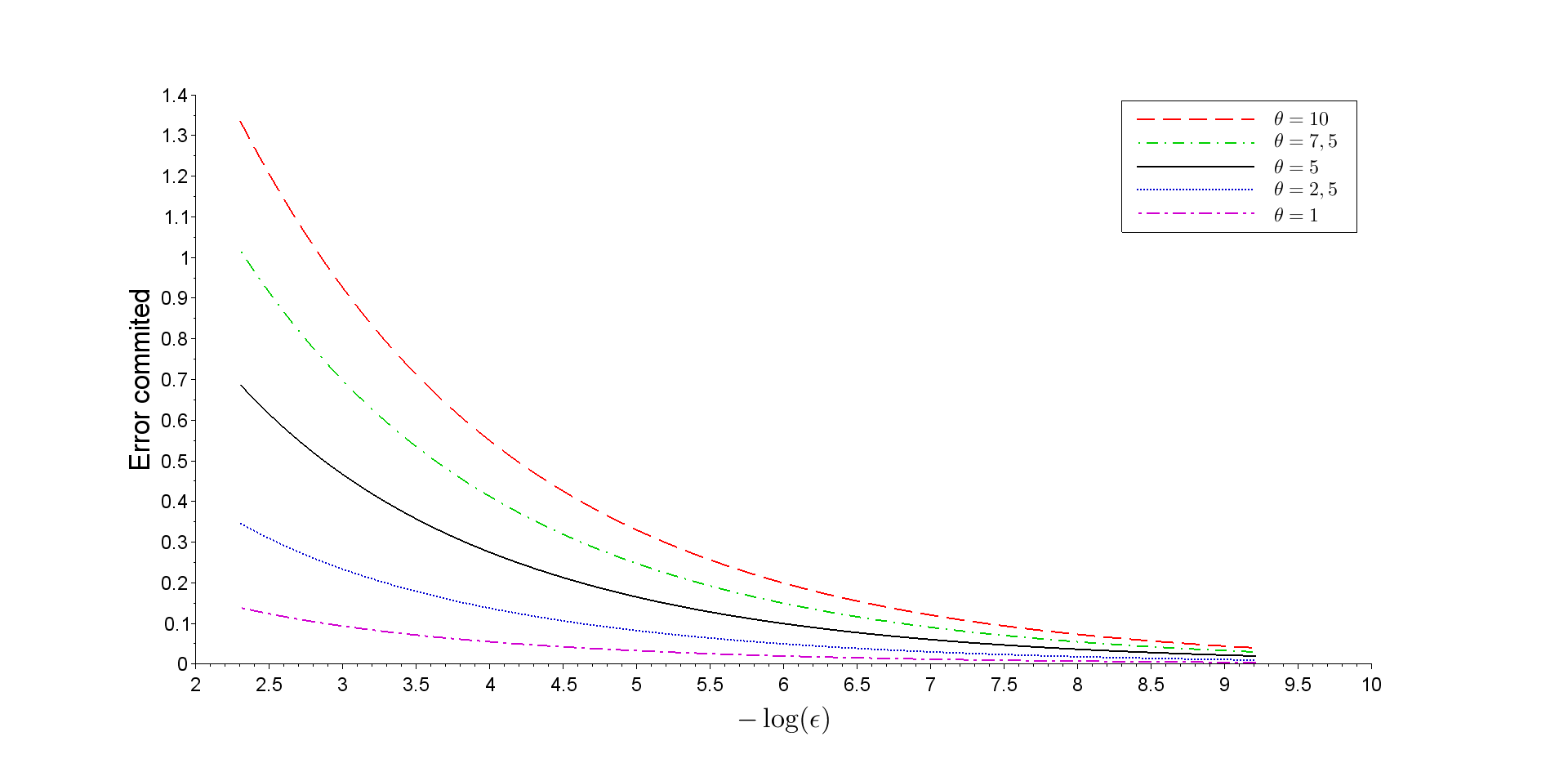}
   \caption{Error bound $\Xi$ versus $\epsilon$ for different values of $\theta$ with $\sigma = 1$, $a =-1$, $b=1$, $\gamma =1$.}
\end{figure}

Such a statement is directly related to properties of the Ornstein-Uhlenbeck process and its strong link with the Brownian motion. 

\begin{proof}
As in Lemma \ref{TfiniteOU}, we build step by step a sequence $((t_n, z_n))_{n\in \mathbb{N}}$ of intersections between the path of the Ornstein-Uhlenbeck process and the spheroids in such a way that the sequences $((t_n,z_n))_{n\ge 0}$ and $((T_n,X_n))_{n\ge 0}$ are identically distributed.\\
If we introduce $N_{\epsilon}$ the stopping time appearing in the stopping procedure of the algorithm  and $\tilde{N}_{\epsilon} = \inf\{n \in \mathbb{N}, z_n \notin [a + \epsilon, b - \epsilon]\}$, the identity in law of those random variables yields. By construction, $t_n \leqslant \mathcal{T}$ for all $n \in \mathbb{N}$, where $\mathcal{T}$ stands for the diffusion first exit time from the interval $[a,b]$. This inequality remains true when $t_n$ is replaced by the random stopping time $t_{\tilde{N}_\epsilon}$.\\
Hence
\begin{align}
1 - F(t) &= \mathbb{P}(\mathcal{T}>t) \nonumber\\
&=\mathbb{P}(\mathcal{T}>t, t_{\tilde{N}_{\epsilon}}\leqslant t -\delta) + \mathbb{P}(\mathcal{T}>t, t_{\tilde{N}_{\epsilon}}> t -\delta)\nonumber\\
&\leqslant \mathbb{P}(\mathcal{T}>t, t_{\tilde{N}_{\epsilon}}\leqslant t -\delta) + \mathbb{P}(t_{\tilde{N}_{\epsilon}} > t - \delta)\nonumber\\
&\leqslant \mathbb{P}(\mathcal{T}>t, t_{\tilde{N}_{\epsilon}}\leqslant t -\delta) +1 - F_{\epsilon}(t- \delta).
\label{majou}
\end{align}
We focus on the first term of this upper bound. Using the strong Markov property, we obtain
\begin{equation}
\mathbb{P}(\mathcal{T}>t, t_{\tilde{N}_{\epsilon}}\leqslant t -\delta) \leqslant F_{\epsilon}(t-\delta) \sup\limits_{y \in [a,a+ \epsilon] \cup [b-\epsilon,b]} \mathbb{P}_y(\mathcal{T}>\delta).
\label{markov}
\end{equation}
For any $y \in [a,a+ \epsilon] \cup [b-\epsilon,b]$ we write
 \begin{equation*}
 \mathbb{P}_y(\mathcal{T}>\delta) = \mathbb{P}_y(\mathcal{T}_a> \delta, \mathcal{T}_a<\mathcal{T}_b) + \mathbb{P}_y(\mathcal{T}_b> \delta, \mathcal{T}_b<\mathcal{T}_a).
 \end{equation*}
 We first consider the case $y \in [b-\epsilon, b]$, the previous inequality becomes
\begin{equation}
 \mathbb{P}_y(\mathcal{T}>\delta) \leqslant \mathbb{P}_y(\mathcal{T}_a<\mathcal{T}_b) + \mathbb{P}_y(\mathcal{T}_b> \delta).
 \label{decomproba}
 \end{equation}
In order to handle with the first term in the right hand side, we introduce $s$ the scale function of the O.-U.-process:
\begin{equation}
s(x) = e^{\frac{\theta}{\sigma^2}a^2}\int_a^x e^{\frac{\theta}{\sigma^2}u^2}du, \quad x \in [a,b].
\label{scalou}
\end{equation} 
It has been shown in Karatzas, 5.5 \cite{kar} that 
\begin{equation}
\mathbb{P}_y(\mathcal{T}_a<\mathcal{T}_b) = \frac{s(b) - s(y)}{s(b) - s(a)} =\frac{\int_{y}^b e^{\frac{\theta}{\sigma^2}u^2}du}{\int_a^b e^{\frac{\theta}{\sigma^2}u^2}du}.
\label{comparscale}
\end{equation}
Since $ y \in [b- \epsilon,b]$ and since the integrated function is non negative and increasing we obtain 
\begin{equation}
\mathbb{P}_y(\mathcal{T}_a<\mathcal{T}_b) \leqslant \frac{\int_{b- \epsilon}^b e^{\frac{\theta}{\sigma^2}u^2}du}{\int_a^b e^{\frac{\theta}{\sigma^2}u^2}du}\leqslant \frac{\epsilon \, e^{\frac{\theta}{\sigma^2}b^2}}{\int_a^b e^{\frac{\theta}{\sigma^2}u^2}du} =: \epsilon \, C_{a,b}.
\label{majerreurun}
\end{equation}
We now focus on the second term in the r.h.s. of \eqref{decomproba}: $\mathbb{P}_y(\mathcal{T}_b> \delta) \leqslant \mathbb{P}_{b-\epsilon}(\mathcal{T}_b> \delta)$ for all $y \in [b-\epsilon, b]$. We denote by $\tilde{X}$ the Ornstein-Uhlenbeck process starting in $b - \epsilon$. We obtain 
\begin{align*}
\left\lbrace \mathcal{T}_b > \delta \right\rbrace &=\left\lbrace\sup\limits_{u \in [0,\delta]} \tilde{X}_{u} <b\right\rbrace = \left\lbrace \forall u \in [0,\delta], \tilde{X}_u<b\right\rbrace \\
&=\left\lbrace  (b-\epsilon)e^{-\theta u} + \frac{\sigma}{\sqrt{2\theta}}e^{-\theta u} V_{e^{2\theta u} -1}<b, \forall u \in [0,\delta] \right\rbrace \\
&=\left\lbrace  V_{s}<\frac{\sqrt{2\theta}}{\sigma}( b \sqrt{s+1}  -(b-\epsilon)), \forall s \in [0,e^{2\theta \delta} -1]\right\rbrace\\
&=\left\lbrace V_{s}<\frac{\sqrt{2\theta}}{\sigma}( b (\sqrt{s+1}  -1)+\epsilon)), \forall s \in [0,e^{2\theta \delta} -1]\right\rbrace\\
&\subset\left\lbrace V_s < \frac{\sqrt{2\theta}}{\sigma}(\epsilon + \max(0,b) (e^{\theta \delta} -1)), \forall s \in [0,e^{2\theta \delta} -1] \right\rbrace. \\
\end{align*}
Let us assume that $b>0$. In this case, the following asymptotic property holds:
\begin{align*}
\mathbb{P}_{b - \epsilon}(\mathcal{T}_b> \delta) &= \mathbb{P}_0\left(\sup\limits_{s \in [0,e^{2\theta \delta} -1]}  V_t  < \frac{\sqrt{2\theta}}{\sigma}(\epsilon + b (e^{\theta \delta} -1))\right)\\
&= \mathbb{P}_0\left(2\vert V_{e^{2\theta \delta} -1} \vert < \frac{\sqrt{2\theta}}{\sigma}(\epsilon + b (e^{\theta \delta} -1))\right)\leqslant\frac{\sqrt{\theta}(\epsilon + b (e^{\theta \delta} -1))}{\sigma\sqrt{ (e^{2\theta \delta} -1) \pi}}.
\end{align*}
Using the particular form of $\delta = \epsilon^{\gamma}$, we obtain
\begin{equation*}
\frac{\sqrt{\theta}(\epsilon + b (e^{\theta \delta} -1))}{\sigma\sqrt{ (e^{2\theta \delta} -1) \pi}} \sim \frac{1}{\sigma \sqrt{2 \pi}}(\epsilon^{1 - \frac{\gamma}{2}} + b \theta \epsilon^{\frac{\gamma}{2}})\quad \mbox{as}\ \epsilon\to 0.
\end{equation*}
A similar bound can be obtained for b negative and also for $y\in[a,a+\epsilon]$.\\ 
Finally combining this result with \eqref{majou}, \eqref{markov} and  \eqref{majerreurun} leads to the announced statement.
\end{proof}

\begin{rem}
Let us note that all the results presented so far, that is the efficiency of the algorithm and the convergence rate, concern the family of Ornstein-Uhlenbeck processes with parameter $\mu=0$ in (2.1). It is straightforward to extend the statements to the general case: it suffices to replace the interval $[a,b]$ by a time-dependent interval $[a-\mu(1-e^{-\theta t}),b-\mu(1-e^{-\theta t})]$.
\end{rem}
\bibliographystyle{plain}

\end{document}